\documentclass[letterpaper,10pt]{amsart}


\usepackage{amsmath,amsthm,amssymb}
\usepackage{xspace,xcolor}
\usepackage[breaklinks,
            colorlinks,
            citecolor=teal,
            linkcolor=teal,
            urlcolor=teal,
            pagebackref,
            hyperindex]{hyperref}
\usepackage[alphabetic]{amsrefs}
\usepackage{color}
\usepackage{comment} 

\usepackage{tikz-cd}
\usepackage{mathrsfs}

\theoremstyle{plain}
\newtheorem{thm}{Theorem}[section]

\newtheorem{lem}[thm]{Lemma}

\theoremstyle{definition}

\newtheorem{question}[thm]{Question}

\theoremstyle{remark}
\newtheorem{rmk}[thm]{Remark}
\newtheorem{set}[thm]{Setting}



\def\Mustata{Mus\-ta\-\c{t}\u{a}\xspace}

\def\Z{{\mathbf Z}}
\def\Q{{\mathbf Q}}
\def\R{{\mathbf R}}
\def\C{{\mathbf C}}

\def\A{{\mathbf A}}
\def\P{{\mathbf P}}

\def\cD{\mathscr{D}}
\def\cE{\mathscr{E}}
\def\cF{\mathscr{F}}

\def\cH{\mathcal{H}}

\def\cM{\mathcal{M}}
\def\cN{\mathcal{N}}
\def\cO{\mathscr{O}}

\def\bff{{\bf f}}

\def\bD{{\bf D}}
\def\bR{{\bf R}}

\def\.{\cdot}
\def\^{\widehat}

\def\({\left(}
\def\){\right)}

\renewcommand{\and}{ \ \ \text{ and } \ \ }

\def\reg{\mathrm{reg}}
\def\sing{\mathrm{sing}}

\def\gr{{\mathrm{Gr}}}
\def\DR{\mathrm{DR}}

\DeclareMathOperator{\coker} {coker}

\DeclareMathOperator{\id} {id}

\def\cHom{{\cH}om}

\makeatletter
\@namedef{subjclassname@2020}{\textup{2020} Mathematics Subject Classification}
\makeatother


\usepackage{soul}

\begin{document}

\author[Q.~Chen]{Qianyu Chen}

\address{Department of Mathematics, University of Michigan, 530 Church Street, Ann Arbor, MI 48109, USA}

\email{qyc@umich.edu}

\subjclass[2020]{14F10, 14B05, 14J17, 32S35}

\begin{abstract}
We prove that the minimal exponent for local complete intersections satisfies an Inversion-of-Adjunction property. As a result, we also obtain the Inversion of Adjunction for higher Du Bois and higher rational singularities for local complete intersections.
\end{abstract}

\title[Inversion of Adjunction for the minimal exponent] {Inversion of Adjunction for the minimal exponent}

\maketitle

\section{Introduction}
\subsection{Main results}

The \emph{minimal exponent} $\widetilde\alpha(X,D)$ of a hypersurface $D$ in an irreducible and smooth complex algebraic variety $X$ was introduced by Saito in~\cite{Saito_microlocal} as the negative of the largest root of the reduced Bernstein-Sato polynomial $\widetilde b_D(s)$. It is a refinement of the ubiquitous invariant, \emph{log canonical threshold}, of the pair $(X,D)$, which is equal to $\min\{\widetilde\alpha(X,D),1\}$ by results of~\cites{Kollar,Lichtin}. When $D$ has isolated singularities, the minimal exponent has the name \emph{Arnold exponent} or \emph{complex singularity index}, studied in~\cites{Varchenko,Loe84,Steenbrink}. Some important features, such as the Restriction Theorem and semicontinuity for the log canonical threshold were extended into the minimal exponent via the theory of Hodge ideals~\cites{hi,hiq} developed by \Mustata and Popa. Recent generalizations to local complete intersections have been made in \cite{CDMO}, where the minimal exponent $\widetilde\alpha(X,Z)$ for a local complete intersection closed subscheme $Z$ was introduced using Hodge filtration and $V$-filtration and the same nice properties for the minimal exponent were established. Similar to the case of hypersurfaces, the minimal exponent $\widetilde\alpha(X,Z)$ refines the log canonical threshold of the pair $(X,Z)$ as showed in~\cite{BMS} and it can be characterized by the reduced Bernstein-Sato polynomial $\widetilde b_Z(s)$ defined in~\cite{BMS}, recently proved in~\cite{Brad}. We refer to Section~\ref{subsec:me} for the definition of the minimal exponent and review of its properties.

In this paper, we prove the following Inversion of Adjunction for the minimal exponent:

\begin{thm}\label{thm:inme}
Let $X$ be an irreducible and smooth complex algebraic variety and $Z$ be a local complete intersection closed subscheme in $X$. If there is a hypersurface $H$ in $X$ containing no irreducible component of $Z$ and a positive rational number $c$ such that,
\[
    \widetilde\alpha(X\setminus H,Z\setminus H)>c \quad  \text{and} \quad  \widetilde\alpha(X,Z\cap H)\geq c+1, 
\]
then we have $\widetilde\alpha(X,Z)>c$.
\end{thm}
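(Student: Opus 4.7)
The plan is to adapt to the lci setting the hypersurface Inversion of Adjunction for the minimal exponent (due to Mustata--Popa and Saito), working throughout with the characterization of $\widetilde\alpha(X,Z)$ from~\cite{CDMO} in terms of the $V$-filtration on the $D$-module pushforward under the graph embedding. Since $\widetilde\alpha(X\setminus H,Z\setminus H)>c$ by hypothesis, it suffices to establish the inequality at each point $x_0\in Z\cap H$; I work in an analytic or étale neighborhood of such a point, with local defining equations $f_1,\ldots,f_r$ of $Z$ and $h$ of $H$.

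Form the graph embedding $\iota:X\hookrightarrow X\times\A^r$, $x\mapsto(x,f_1(x),\ldots,f_r(x))$, and let $\cB_Z=\iota_+\cO_X$, endowed with its Hodge filtration and Kashiwara--Malgrange $V$-filtration along $X\times\{0\}$. By~\cite{CDMO}, the inequality $\widetilde\alpha(X,Z)>c$ corresponds to the position of a distinguished generating class in $\cB_Z$ (built from $\delta(t_1-f_1)\cdots\delta(t_r-f_r)$) in the $V$-filtration. Using the analogous embedding $\iota':X\hookrightarrow X\times\A^{r+1}$ via $(f_1,\ldots,f_r,h)$, the hypothesis $\widetilde\alpha(X,Z\cap H)\geq c+1$ becomes the corresponding statement at level $c+1$ for $\cB_{Z\cap H}=\iota'_+\cO_X$, the shift by one reflecting the added coordinate.

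The key observation is that $\iota'$ factors as $\iota$ followed by the graph of (the pullback of)~$h$ on $X\times\A^r$, yielding an isomorphism $\cB_{Z\cap H}\cong\Gamma_{h,+}\cB_Z$. This recasts the hypothesis on $\widetilde\alpha(X,Z\cap H)$ as a condition on how $h$ interacts with the $V$-filtration of $\cB_Z$ along the new coordinate $u-h=0$. With this reformulation, the theorem reduces to a hypersurface-type Inversion of Adjunction along the function $h$ applied to the bifiltered holonomic $D$-module $\cB_Z$: the assumption on $X\setminus H$ controls the filtration away from $h=0$, while the assumption on $Z\cap H$ controls it along $h=0$, and the conclusion is the sought $V$-filtration bound for $\cB_Z$ itself.

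The main obstacle is making this last reduction rigorous: one must verify a strictness/compatibility statement for the Hodge filtration on $\cB_Z$ with respect to the two $V$-filtrations (along $X\times\{0\}$ and along $u-h=0$) in a suitable range of the indexing parameters, and then run a Saito-style argument that converts an estimate on the nearby/vanishing-cycle pieces along $h$ into an estimate on the original $V$-filtration along $X\times\{0\}$. The assumption that $H$ contains no irreducible component of $Z$ enters here to guarantee that $h$ acts non-degenerately on the relevant graded pieces of $\cB_Z$, which is precisely what makes the bi-filtration argument work and enables reduction to the established hypersurface-level Inversion of Adjunction.
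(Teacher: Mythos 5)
Your proposal is an outline of a possible approach, but the decisive step is not carried out, and as a result this is not a proof. You correctly identify that the graph embedding along $(f_1,\ldots,f_r,h)$ realizes $\cB_{Z\cap H}$ as $\Gamma_{h,+}\cB_{\bff}$, and that the hypotheses translate into two bounds on the interaction of the Hodge filtration of $\cB_{\bff}$ with two different $V$-filtrations (along $t=0$ and along $u-h=0$). But the sentence ``the theorem reduces to a hypersurface-type Inversion of Adjunction along $h$ applied to the bifiltered holonomic $\cD$-module $\cB_{\bff}$'' is precisely what needs to be proved; there is no established theorem of this form. The known hypersurface Inversion of Adjunction statements (Loeser, Dirks--\Mustata, Saito) concern the minimal exponent of a hypersurface cut by a smooth slice, i.e.\ the case $\cB_{\bff}=\cO_X$ with $r=1$; they do not apply to a general Hodge-module pushforward $\cB_{\bff}$, and there is no off-the-shelf compatibility or strictness statement for two simultaneous $V$-filtrations plus a Hodge filtration in the range you need. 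You explicitly flag this as ``the main obstacle'' and then describe, rather than prove, what would have to be done. That gap is the whole content of the theorem.

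For comparison, the paper's proof avoids any bi-$V$-filtration argument. It first reduces to the case where $H$ is smooth (by cutting down with general hyperplanes through the locus where the bound might fail, then replacing $H$ by a general member $H_t$ of the pencil through a general smooth hyperplane, using the semicontinuity and restriction theorems from \cite{CDMO}); note your sketch never addresses non-smooth $H$ at all. It then reduces from lci $Z$ of codimension $r$ to a hypersurface in $X\times\P^{r-1}$ via Theorem~\ref{thm:cdmo}, and reduces from general rational $c$ to integer $c$ by a Thom--Sebastiani trick, adding $w_1^N+\cdots+w_m^N$. At that point the statement becomes Theorem~\ref{thm:strinratbd}, an Inversion of Adjunction for $k$-Du Bois and $k$-rational singularities, which is proved using the criterion of Lemma~\ref{lem:red} ($Z$ is $k$-rational iff it is $k$-Du Bois and a certain $\cH^0$ of a dualized graded de Rham complex is surjective), the excision triangles, duality for mixed Hodge modules, and the log comparison Lemma~\ref{lem:logcomapre}. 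This path replaces the bifiltered strictness problem you point to with a chain of geometric reductions plus a Hodge-module duality argument, and each step cites a concrete established result. To salvage your approach, you would need to formulate and prove the missing bi-filtration compatibility statement for $\cB_{\bff}$, which is not a small lemma but the theorem itself in a different guise.
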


Theorem~\ref{thm:inme} can be applied to the setting where $Z$ has a unique isolated singular point at $P\in Z$ and $H$ is a hypersurface in $X$ containing $P$. In this case, we deduce that $\widetilde\alpha(X,Z)> \widetilde\alpha(X,Z\cap H)+1$. If we further assume that $Z$ is a hypersurface in $X$ and $H$ is smooth, an improvement was obtained in~\cite{Loe84} and~\cite{DM22}.

The minimal exponent $\widetilde{\alpha}(X,Z)$ is closely related to higher Du Bois and higher rational singularities. These singularities have been recently studied as a generalization of classical Du Bois and rational singularities~\cites{MOPW, Saito_et_al, FL1,FL2,FL3,MP1,MP2,CDMO,CDM,shen2023k}. It has been shown that if the local complete intersection $Z$ has pure codimension $r$ in $X$, then $\widetilde{\alpha}(X,Z)\geq k+r$ (resp. $\widetilde{\alpha}(X,Z)> k+r$) is equivalent to that $Z$ having at worst $k$-Du Bois singularities (resp. $k$-rational singularities) in~\cites{MP1,CDMO,CDM}. We refer to Section~\ref{subsec_hbhr} for the precise definitions of higher Du Bois and higher rational singularities.

As a consequence of Theorem~\ref{thm:inme}, we obtain Inversion of Adjunction for $k$-Du Bois singularities and $k$-rational singularities for local complete intersections:

\begin{thm}\label{thm:inratbd}
Let $Z$ be a complex algebraic variety with local complete intersection singularities. If there is an effective Cartier divisor $D$ in $Z$ and a non-negative integer $k$ such that $Z\setminus D$ has $k$-rational singularities and that $D$ has $k$-Du Bois singularities then $Z$ has $k$-rational singularities.
\end{thm}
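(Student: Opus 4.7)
The plan is to deduce Theorem~\ref{thm:inratbd} directly from Theorem~\ref{thm:inme} by translating both the hypotheses and the conclusion into statements about the minimal exponent. Since all four properties in play (being lci, being $k$-Du Bois, being $k$-rational, and being a Cartier divisor) are local on $Z$, I would work \'etale or analytically locally around an arbitrary point $p\in Z$, so as to choose a closed embedding $Z\hookrightarrow X$ into a smooth irreducible variety with $Z$ of pure codimension $r\geq 1$. Since $D$ is Cartier on $Z$, after shrinking I can write $D=V(f)$ with $f\in\O_Z$ a non-zero-divisor, lift $f$ to some $\widetilde f\in\O_X$, and set $H=V(\widetilde f)$. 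Then $H$ is a hypersurface in $X$ and $Z\cap H=D$ as schemes.

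Before applying Theorem~\ref{thm:inme}, I would verify its hypotheses. Because $f$ is a non-zero-divisor on $\O_Z$, it cannot vanish identically on any irreducible component of $Z$, so the lift $\widetilde f$ does not lie in the ideal of any component of $Z$ and $H$ contains no such component. Being Cartier also forces $D$ to be lci of pure codimension $r+1$ in $X$, which is exactly what is needed to invoke the equivalences recalled in the introduction, namely
\[
    Z\setminus D \text{ is } k\text{-rational}\iff \widetilde\alpha(X\setminus H,\,Z\setminus H)>k+r,
\]
and
\[
    D \text{ is } k\text{-Du Bois}\iff \widetilde\alpha(X,\,Z\cap H)\geq k+r+1.
\]
Setting $c=k+r$ (a positive rational, since $r\geq 1$ and $k\geq 0$; the case $r=0$ is vacuous as $Z$ is then smooth), Theorem~\ref{thm:inme} yields $\widetilde\alpha(X,Z)>k+r$, and the same equivalences translate this back to $Z$ having $k$-rational singularities at $p$. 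Since $p$ was arbitrary, the conclusion follows.

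I expect essentially all of the substantive mathematical work to be concentrated in Theorem~\ref{thm:inme} itself, so the only genuinely delicate step in this deduction is the Cartier-divisor bookkeeping: producing the local lift $\widetilde f$ that cuts out a hypersurface $H$ with $H\cap Z=D$ scheme-theoretically while avoiding every irreducible component of $Z$. This is what permits the conversion of the intrinsic hypotheses on $(Z,D)$ into the extrinsic minimal-exponent hypotheses on $(X,Z,H)$ demanded by Theorem~\ref{thm:inme}; once that bookkeeping is in place, the deduction is a one-line application.
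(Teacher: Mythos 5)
Your proof is correct and follows essentially the same route as the paper's. The paper's version of this argument is a three-sentence deduction that cites Theorem~\ref{thm:inme} and Theorem~\ref{thm:medbrat} directly; you supply the bookkeeping it leaves implicit, namely that the local Cartier equation of $D$ lifts to a hypersurface $H\subset X$ with $Z\cap H=D$ scheme-theoretically, that $H$ contains no component of $Z$ because a non-zero-divisor on $\cO_Z$ lies in no minimal prime, and that $D$ is then lci of pure codimension $r+1$ in $X$ so that Theorem~\ref{thm:medbrat} converts the $k$-Du Bois hypothesis on $D$ into $\widetilde\alpha(X,Z\cap H)\geq k+r+1$. One small remark: Zariski-local shrinking suffices here (which is what ``local on $Z$'' means in the paper) — lifting a regular function along the surjection $\cO_X\twoheadrightarrow\cO_Z$ works on any affine open — so the hedge to \'etale or analytic localization is unnecessary, though harmless.
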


Inversion of Adjunction for Du Bois and rational singularities was proved by Schwede~\cite{Sch}*{Theorem 5.1} under the assumption that $Z\setminus D$ is smooth but no need that $Z$ is a local complete intersection. A more general statement on Inversion of Adjunction of Du Bois and rational pairs was proved in~\cite{KS} and~\cite{MSS}. An Hodge theoretic proof can be found in a recent paper~\cite{park}.

Recently higher Du Bois and higher rational singularities beyond local complete intersections were discussed in~\cite{shen2023k}. We ask

\begin{question}
Does Inversion of Adjunction for higher Du Bois and higher rational singularities hold without the local complete intersection assumption?
\end{question}

\subsection{Outline} We first review the related notions of the minimal exponent and the tools from mixed Hodge modules in Section~\ref{sec:bg}. We reduce the proof of Theorem~\ref{thm:inme} to a special form (Theorem~\ref{thm:strinratbd}) of Theorem~\ref{thm:inratbd} using several properties of the minimal exponent proved in~\cite{CDMO} in Section~\ref{subsec:red}. Lastly, we make use of mixed Hodge modules with ideas in~\cite{Sch} to prove Theorem~\ref{thm:strinratbd} through a new characterization (Lemma~\ref{lem:red}) of higher rational singularities.

\subsection{Acknowledgement} The author thanks Radu Laza for bringing attention to this question and he thanks Brad Dirks and Mircea \Mustata for valuable discussions. He is indebted to Mihnea Popa for several useful comments. He is very grateful to Mircea \Mustata for teaching him the argument in Section~\ref{subsec:red}. The author was partially supported by NSF grant DMS-1952399.

\section{Background} \label{sec:bg}

\subsection{Mixed Hodge modules}
We briefly recall some facts concerning mixed Hodge modules for the reader's convenience and lay down some notation which will be used. We refer to Saito's original papers~\cites{Saito_MHP,Saito_MHM} for details. A good reference for $\cD$-modules is~\cite{HTT}. 

We will mainly work with left $\cD$-modules. Let $X$ be a smooth complex algebraic variety of dimension $n$. A typical example of a pure Hodge module is the constant Hodge module $\Q_X^H[n]$, whose filtered $\cD_X$-module is $\cO_X$ with the Hodge filtration such that $\gr^F_p\cO_X=0$ for any $p\neq 0$. Sometimes, we will abuse the mixed Hodge module with the its underlying $\cD$-module. For a filtered left $\cD$-module $(\cM,F)$ underlying a mixed Hodge module, the associated graded de Rham complex of $(\cM,F)$ is
\begin{equation}\label{eq:drconv}
    \gr^F_{p}\DR_X(\cM) :=\big[ \underbrace{\gr^F_p\cM}_{\text{degree } -n} \to \underbrace{\Omega^1_X\otimes \gr^F_{p+1}\cM}_{\text{degree } -n+1} \cdots \to  \underbrace{\omega_X\otimes \gr^F_{p+n}\cM}_{\text{degree }0} \big]
\end{equation}
is a complex of coherent $\cO_X$-modules. For example, $\gr^F_p\DR_X(\cO_X)=\Omega_X^p[n-p]$. The definition can be easily carried over to the derived category of mixed Hodge modules $\rm D^b(\mathrm{MHM}(X))$. 

The six-functor formalism for mixed Hodge modules~\cite{Saito_MHM}*{Theorem 0.1}, extending the same formalism for the perverse sheaves, established by Saito, is crucial in this paper. We will frequently use adjunction and duality, which will be briefly reviewed. 

Denote by $\mathbf D_X$ the duality functor in $\rm D^b(\mathrm{MHM}(X))$. A polarization on a Hodge module $M$ of weight $w$, induces an isomorphism $M\cong \bD_X(M)(w)$. Here, $(w)$ denotes the Tate twist: on the level of filtered $\cD_X$-module $(\cM,F)$, by definition $(\cM,F_\bullet)(w)=(\cM,F_{\bullet-w})$. The functor $\bD_X$ is compatible with Grothendieck duality in the sense that~\cite{Saito_MHP}*{2.4.3}:  
\[
    \gr^F_p \DR_X (\mathbf D_X(-)) \cong \mathbf R \mathcal H om_{\cO_X}(\gr^F_{-p}\DR_X(-),\omega_X[n])
\]
for every $p\in\Z$ as functors from $\rm D^b(\mathrm{MHM}(X))$ to $\rm D^b_{\rm coh}(X)$. If it is clear from the context, we will also denote by $\bD_X$ the Grothendieck duality $\bR\cHom(-,\omega_X[n])$. Then the above becomes
\begin{equation}\label{eq:dualprop}
    \gr^F_p \DR_X \circ \mathbf D_X \cong \bD_X \circ \gr^F_{-p}\DR_X.
\end{equation}

For a morphism $f\colon X\to Y$ between smooth complex algebraic varieties, we will use $f_*\colon \rm D^b(\mathrm{MHM}(X))\to \rm D^b(\mathrm{MHM}(Y))$ to denote the direct image functor of mixed Hodge modules. Sometimes we will abuse it with the direct image functor $f_+\colon \rm D^b_{\rm rh}(\cD_X)\to \rm D^b_{\rm rh}(\cD_Y)$ on their regular holonomic $\cD$-modules. The functor $f_*$ is the right adjoint of the inverse image functor $f^*$. Denote by $f_!:=\bD_Y \circ f_*\circ \bD_X$ the proper direct image functor; it coincides with $f_*$ when $f$ is proper. Lastly, we have the proper inverse image functor $f^!:=\bD_X \circ f^* \circ \bD_Y$ which is the right adjoint of $f_!$. These functors are all compatible with the underlying $\Q$-complexes. When $f$ is proper, for every $p\in \Z$, we have a natural isomorphism between functors
\begin{equation}\label{eq:direct}
    \bR f_*\circ\gr^F_{p}\DR_X\cong \gr^F_{p}\DR_Y\circ f_+
\end{equation}
where $\bR f_*\colon \rm D^b_{\rm coh}(X)\to \rm D^b_{\rm coh}(Y)$ is the derived direct image functor, as functors from $\rm D^b(\mathrm{MHM}(X))$ to $\rm D^b_{\rm coh}(\cO_Y)$ by~\cite{Saito_MHP}*{2.3.7}.

We end this part by recalling the excision distinguished triangles~\cite{Saito_MHM}*{4.4.1}. For any closed immersion $i\colon Z \to X$ from a closed subvariety and the open immersion $j\colon X\setminus Z \to X$, we have the distinguished triangles 
\[
j_!j^!  \to \id \to i_*i^*  \xrightarrow{+1} \quad \text{and}\quad
i_!i^!  \to \id \to j_*j^* \xrightarrow{+1}
\]
in $\rm D^b(\mathrm{MHM}(X))$. In this case, $j^!$ and $j^*$ are just the restriction to the open subset $X\setminus Z$. If it is clear from the context, we use $j_!$ and $j_*$ as shorthand for $j_!j^!$ and $j_*j^*$, respectively.

\subsection{The minimal exponent}\label{subsec:me}
Suppose that $X$ is an irreducible and smooth complex algebraic variety and $f_1,\ldots,f_r\in\cO_X(X)$ are non-zero regular functions which define a closed subscheme $Z$ of $X$. Let $\bff:=(f_1,f_2\ldots,f_r)$ and let
\[
\iota\colon X\hookrightarrow X\times\A^r,\quad \iota(x)=\big(x,f_1(x),\ldots,f_r(x)\big)
\]
be the graph embedding along $\bff$. Let $B_{\bff}=\iota_+\cO_X$ be the $\cD$-module direct image of $\iota$. If $t_1,\ldots,t_r$ denote the standard coordinates on 
$\A^r$, then we can write 
\[
B_{\bff}=\bigoplus_{\alpha\in \Z_{\geq 0}^r}\cO_X\cdot\partial_t^{\alpha}\delta_{\bff},
\]
where $\partial_t^{\alpha}=\partial_{t_1}^{\alpha_1}\cdots \partial_{t_r}^{\alpha_r}$ for $\alpha=(\alpha_1,\ldots,\alpha_r)$ with the natural action of $\cO_X$ and of $\partial_{t_i}$. The actions of a vector field $\xi$ on $X$ and of the $t_i$ are given by 
\[
\xi\cdot h\partial_t^{\alpha}\delta_{\bff}=\xi(h)\partial_t^{\alpha}\delta_{\bff}-\sum_{i=1}^r\xi(f_i)h\partial_t^{\alpha+e_i}\delta_{\bff}
\]
and
\[
t_i\cdot h\partial_t^{\alpha}\delta_{\bff}=f_ih\partial_t^{\alpha}\delta_{\bff}-\alpha_ih\partial_t^{\alpha-e_i}\delta_{\bff},
\]
where $e_1,\ldots,e_r$ is the standard basis of $\Z^r$. In fact, $B_{\bff}$ underlies the pure Hodge module $\iota_*\Q_X^H[\dim X]$,
with the Hodge filtration $(F_p B_\bff)_{p\in\Z}$ given by
\[
F_{p}B_{\bff}=\bigoplus_{|\alpha|\leq p-r}\cO_X\cdot \partial_t^{\alpha}\delta_{\bff},
\]
where $|\alpha|=\alpha_1+\ldots+\alpha_r$. Note that $F_pB_\bff=0$ if $p<r$.

Let $(V^{\lambda}B_{\bff})_{\lambda\in\Q}$ be the $V$-filtration along $X\times \{0\}\subset X\times \A^r$, introduced by Kashiwara \cite{Kashiwara} and Malgrange \cite{Malgrange}. We refer to~\cites{BMS,CD} for the definition and properties of $V$-filtrations along a subvariety of any codimension.
The \emph{minimal exponent} $\widetilde{\alpha}(X,Z)$ defined~\cite{CDMO} is the unique positive rational number or $\infty$ determined by the following condition: for some positive integer $q$ and rational number $\alpha$ in $[0,1)$,
\begin{equation}\label{eq3_intro}
\widetilde{\alpha}(X,Z)
\geq q-\alpha \Longleftrightarrow \left\{
\begin{array}{cl}
F_rB_{\mathbf f}\subseteq V^{q-\alpha}B_{\bff} , & \text{if}\,\,\delta_{\mathbf f}\not\in V^rB_{\mathbf f}; \\[2mm]
 F_{q}B_{\bff}\subseteq V^{r-\alpha}B_{\bff} , & \text{if}\,\,\delta_{\mathbf f}\in V^rB_{\mathbf f},
\end{array}\right.
\end{equation}
Note that the convention for Hodge filtration in~\cite{CDMO} is non-standard: what we denote by $F_{q}B_{\bff}$ here is denoted by $F_{q-r}B_{\bff}$  in \cite{CDMO}. If $Z$ is define by a single regular function $f$, then the minimal exponent $\widetilde\alpha(X,Z)$ is also denote by $\widetilde\alpha(f)$.

Alternatively, the minimal exponent $\widetilde\alpha(X,Z)$ can be defined as the negative of the largest root of the \emph{reduced Bernstein-Sato polynomial} $\widetilde{b}_{\bff}(s):=b_{\bff}(s)/(s+r)$, where $b_{\bff}(s)$ is the Bernstein-Sato polynomial of ${\bff}$; see~\cite{BMS}. The fact that this agrees with the characterization~\eqref{eq3_intro} is a consequence of \cite{Saito-MLCT} and \cite{Brad}. 

\begin{rmk}\label{rem:ind}
    The minimal exponent $\widetilde\alpha(X,Z)$ depends on the embedding $i\colon Z\to X$ in a predicted way as pointed out by~\cite{CDMO}*{Proposition 4.14}: if $Z$ is embedded in another irreducible and smooth complex algebraic variety $Y$, then 
    \[
    \widetilde\alpha(X,Z)-\dim X=\widetilde\alpha(Y,Z)-\dim Y.
    \]
\end{rmk}

In the global setting, if $Z$ is a local complete intersection closed subscheme of pure codimension $r$, we can cover $X$ by by affine open subsets $U_1, U_2,\dots, U_N$, and put
\[
    \widetilde{\alpha}(X,Z):=\min_{i;Z\cap U_i\neq\emptyset}\widetilde{\alpha}(U_i,Z\cap U_i).
\]
As showed in~\cite{BMS}, the log canonical threshold ${\rm lct}(X,Z)$ is $\min\{\widetilde\alpha(X,Z),r\}$. 

It is also convenient for us to use the local version of the minimal exponent: for any point $x\in Z$, define
\begin{equation}\label{eq:locme}
\widetilde\alpha_x(X,Z):=\max \widetilde\alpha(U,Z\cap U)
\end{equation}
where the maximum runs over any open neighborhoods $U$ of the point $x$. The fact that it can achieve the maximum not only supremum is pointed in~\cite{CDMO}*{Definition 4.16}.

The following is a rephrasing of the main Theorem in~\cite{CDMO}.
 
\begin{thm} \label{thm:cdmo}
    Let $Z$ be a closed subscheme of an irreducible and smooth complex algebraic variety $X$ defined by a regular sequence $(f_1,f_2,\dots,f_r)$, let $[y_1:y_2:\dots:y_r]$ be a system of homogeneous coordinates on $\mathbf P^{r-1}$ and let $Z'$ be the hypersurface in $X':=X\times \mathbf{P}^{r-1}$ defined by the function  $g=y_1f_1+y_2f_2+\cdots y_rf_r$. Then we have $\widetilde\alpha(X', Z')=\widetilde\alpha(X,Z)$.
\end{thm}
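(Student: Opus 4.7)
The plan is to locally identify the minimal exponent of the hypersurface $Z'$ with that of the local complete intersection $Z$ by passing through the universal linear combination of the defining equations. By the invariance statement in Remark~\ref{rem:ind} and the locality of the minimal exponent, I would first reduce to the standard affine chart $U_r=\{y_r\neq 0\}$ of $\P^{r-1}$, on which $Z'$ is cut out by the regular function $g_r = f_r + y_1 f_1 + \cdots + y_{r-1}f_{r-1}$ on $X \times \A^{r-1}$. Since $Z'$ is smooth at any point $(x,[y])$ with $x\notin Z$ (one of the derivatives $\partial g/\partial y_i = f_i(x)$ is nonzero), the minimal exponent $\widetilde\alpha(X',Z')$ is realized at points with $x\in Z$, and it suffices to show $\widetilde\alpha_x(g_r)=\widetilde\alpha_x(X,Z)$ for every $x\in Z$, independently of the fiber coordinate $y$.

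The central step is a $\cD$-module identification. I would introduce the universal linear combination $\tilde g = y_1 f_1 + \cdots + y_r f_r$ on $X \times \A^r$ and relate it to both sides. On one hand, restricted to the $\mathbf{G}_m$-bundle $\{y_r\neq 0\} \subset \A^r$ over $U_r$, we have $\tilde g = y_r\cdot g_r$ with $y_r$ invertible, so $\tilde g$ and $g_r$ define the same hypersurface up to a unit, whence their minimal exponents at corresponding points agree. On the other hand, the filtered $\cD$-module $B_{\tilde g}$ on $X \times \A^r \times \A$ (with its codimension-one $V$-filtration along $\{t=0\}$) must be compared to $B_{\bff}$ on $X \times \A^r$ (with its codimension-$r$ $V$-filtration along $X \times \{0\}$); under this comparison, the characterization \eqref{eq3_intro} of $\widetilde\alpha(X,Z)$ via the containment $F_qB_{\bff}\subseteq V^{r-\alpha}B_{\bff}$ translates to the hypersurface condition $F_qB_{\tilde g}\subseteq V^{1-\alpha}B_{\tilde g}$, yielding $\widetilde\alpha(\tilde g)=\widetilde\alpha(X,Z)$.

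Combining the two identifications with the dimension-shift invariance of the minimal exponent then gives $\widetilde\alpha(X',Z')=\widetilde\alpha(X,Z)$. The main obstacle is the second identification: one must carefully track the interaction of the Hodge filtration with the higher-codimension $V$-filtration theory developed in \cite{BMS,CD}, so that the index shift by $r$ on the $B_{\bff}$ side and the index shift by $1$ on the hypersurface side cancel correctly. This matching is precisely the main technical identity proved in \cite{CDMO}, which is why the present statement is described as a rephrasing of the main result there.
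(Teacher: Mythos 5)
The paper itself offers no argument here: it states explicitly that Theorem~\ref{thm:cdmo} is ``a rephrasing of the main Theorem in~\cite{CDMO}'' and leaves the substance entirely to that reference. Your proposal tries to supply a sketch, and the general direction (a universal linear combination of the $f_i$, a $V$-filtration / Hodge-filtration comparison between $B_{\bff}$ and a hypersurface module) is the right one. But the pivotal intermediate claim $\widetilde\alpha(\tilde g)=\widetilde\alpha(X,Z)$ for $\tilde g=y_1f_1+\cdots+y_rf_r$ on $X\times\A^r$ is false. Take $r=2$, $f_1=x_1$, $f_2=x_2$ on $X=\A^2$, so $Z$ is a reduced point and $\widetilde\alpha(X,Z)=\infty$. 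Then $\tilde g=y_1x_1+y_2x_2$ is a nondegenerate quadratic form in four variables, so $\widetilde\alpha(\tilde g)=2$, while $Z'=\{y_1x_1+y_2x_2=0\}\subset X\times\P^1$ is smooth and $\widetilde\alpha(X',Z')=\infty$, as the theorem predicts.

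The source of the discrepancy is the fiber $y=0$: the hypersurface $\{\tilde g=0\}\subset X\times\A^r$ contains all of $X\times\{0\}$ and acquires there singularities (generically a nondegenerate quadratic cone in the normal directions along $Z\times\{0\}$) that have nothing to do with the singularities of $Z$. This is precisely why the theorem uses $\P^{r-1}$ rather than $\A^r$: the projectivization removes the locus $y=0$. The statement you would actually need is local and excludes $y=0$, roughly $\min_{y\neq 0}\widetilde\alpha_{(x,y)}(\tilde g)=\widetilde\alpha_x(X,Z)$, together with the $\mathbf G_m$-bundle reduction you already describe to pass from $\A^r\setminus\{0\}$ over to $\P^{r-1}$; and that local identity is exactly the main theorem of~\cite{CDMO} restated, not something that falls out of a straightforward index shift between the codimension-$r$ $V$-filtration on $B_{\bff}$ and the codimension-one $V$-filtration on $B_{\tilde g}$. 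Your sketch correctly names the key input but mislocates it, presenting a false global statement where only a delicate local one holds.
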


This rephrasing has already been used in~\cite{CDMO} to study the Restriction Theorem and the semicontinuity for the minimal exponent~\cite{CDMO}*{Theorem 1.2i) and ii)}:

\begin{thm}\label{thm3_intro}
Let $X$ be an irreducible and smooth complex algebraic variety and 
let $Z$ be a local complete intersection closed subscheme of $X$ of pure codimension $r$. 
\begin{enumerate}
\item[i)] If $H$ is a smooth hypersurface in $X$ that contains no irreducible component of $Z$, then for every 
$x\in Z\cap H$, we have
\[
\widetilde{\alpha}_x(H, Z\cap H)\leq\widetilde{\alpha}_x(X, Z).
\]
\item[ii)] Given a smooth morphism $\mu\colon X\to T$ such that for every $t\in T$, $Z_t:=Z\cap \mu^{-1}(t)\hookrightarrow
X_t=\mu^{-1}(t)$ has pure codimension $r$, then the following hold: 
\begin{enumerate}
\item[${\rm ii_1)}$] For every $\alpha\in\Q_{>0}$, the set 
\[
\big\{x\in Z\mid \widetilde{\alpha}_x(X_{\mu(x)}, Z_{\mu(x)})\geq \alpha\}
\]
is open in $Z$. 
\item[${\rm ii_2)}$] There is an open subset $T_0$ of $T$ such that for every $t\in T_0$ and $x\in Z_t$, we have
\[
\widetilde{\alpha}_x(X_t, Z_t)=\widetilde{\alpha}_x(X, Z).
\]
\end{enumerate}
In particular, the set $\big\{\widetilde{\alpha}_x(X_{\mu(x)}, Z_{\mu(x)})\mid x\in Z\big\}$ is finite. Moreover, 
if $s\colon T\to X$ is a section of $\mu$ such that $s(T)\subseteq Z$, then the set 
$\big\{t\in T\mid \widetilde{\alpha}_{s(t)}(X_t, Z_t)\geq\alpha\big\}$ is open in $T$ for every $\alpha\in\Q_{>0}$.
\end{enumerate}
\end{thm}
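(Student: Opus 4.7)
The plan is to reduce both parts to the corresponding hypersurface statements via Theorem~\ref{thm:cdmo}. That theorem packages the local complete intersection $(X,Z)$ inside the hypersurface $Z'=\{g=0\}$ of $X'=X\times\P^{r-1}$ with the same global minimal exponent, and the hypersurface analogues of both the Restriction Theorem and semicontinuity are already available via the Hodge ideals framework of \Mustata and Popa~\cites{hi,hiq}. The bridge I will need between the two is a pointwise refinement of Theorem~\ref{thm:cdmo}.

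\medskip

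The main technical step is therefore to establish the local version: I claim that for any $x\in Z$, with $\pi\colon X'\to X$ the projection,
\[
\widetilde\alpha_x(X,Z)=\min_{x'\in\pi^{-1}(x)}\widetilde\alpha_{x'}(X',Z').
\]
I would prove the inequality $\leq$ by running Theorem~\ref{thm:cdmo} on sufficiently small affine neighborhoods of $x$, using that $Z'$ is smooth off $\pi^{-1}(Z)$ since $\partial g/\partial y_i=f_i$. For the reverse inequality I would show that on a small enough neighborhood $U$ of $x$ the minimum of $\widetilde\alpha_{\bullet}$ over $Z'\cap(U\times\P^{r-1})$ is already attained in the fibre $\pi^{-1}(x)$, using lower semicontinuity of $\widetilde\alpha_{\bullet}(X',Z')$ from the hypersurface setting. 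This is the step I expect to be the main obstacle, since the pointwise invariant $\widetilde\alpha_x$ is defined as a maximum over shrinking neighborhoods rather than as a stalk.

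\medskip

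Granting this, part~(i) becomes immediate: setting $H':=H\times\P^{r-1}$, the product is a smooth hypersurface of $X'$ containing no component of $Z'$, and $Z'\cap H'$ coincides with the hypersurface associated to $(H,Z\cap H)$ by the construction of Theorem~\ref{thm:cdmo}. For $x\in Z\cap H$ the whole fibre $\{x\}\times\P^{r-1}$ lies inside $Z'\cap H'$, so the hypersurface Restriction Theorem supplies the pointwise inequality $\widetilde\alpha_{x'}(H',Z'\cap H')\leq\widetilde\alpha_{x'}(X',Z')$ for each $x'\in\pi^{-1}(x)$; minimising over the fibre using the local refinement applied to both $(X,Z)$ and $(H,Z\cap H)$ then yields the claim.

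\medskip

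For part~(ii), I would compose with $\pi$ to form the smooth morphism $\mu':=\mu\circ\pi\colon X'\to T$ whose fibre over $t$ is $X_t\times\P^{r-1}$ containing the hypersurface $Z'_t$ attached to $(X_t,Z_t)$. Applying the hypersurface analogue of (ii$_1$) to $Z'\subset X'\to T$ produces a closed set in $Z'$ where $\widetilde\alpha_{x'}(X'_{\mu'(x')},Z'_{\mu'(x')})<\alpha$; it is contained in $\pi^{-1}(Z)$, and its image under the proper restriction $\pi\colon\pi^{-1}(Z)\to Z$ is, by the local refinement applied to the fibres, precisely the corresponding locus in $Z$, so (ii$_1$) follows. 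I would then deduce (ii$_2$) by intersecting the open subset of $T$ supplied by the hypersurface version with the fibrewise equality from the local refinement. Finiteness of the value set and the section statement follow by standard constructibility.
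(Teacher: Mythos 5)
This theorem is not proved in the paper — it is quoted verbatim from~\cite{CDMO}*{Theorem 1.2 i), ii)}, as the sentence preceding it makes explicit. That said, your reduction via Theorem~\ref{thm:cdmo} is exactly the route the paper credits to~\cite{CDMO} (``This rephrasing has already been used in~\cite{CDMO} to study the Restriction Theorem and the semicontinuity for the minimal exponent''), so your plan reconstructs the intended proof. You have also put your finger on the genuine technical content, namely the pointwise refinement
\[
\widetilde\alpha_x(X,Z)=\min_{x'\in\pi^{-1}(x)}\widetilde\alpha_{x'}(X',Z'),
\]
and your sketch of both inequalities is sound: the ``$\leq$'' direction follows by applying Theorem~\ref{thm:cdmo} to an affine neighborhood $U_0$ of $x$ achieving the maximum in the definition of $\widetilde\alpha_x(X,Z)$ and noting that $\widetilde\alpha(U_0\times\P^{r-1},\cdot)\leq\widetilde\alpha_{x'}(X',Z')$ for every $x'$ in the fibre; the ``$\geq$'' direction follows because $\pi^{-1}(x)\cong\P^{r-1}$ is compact, $\pi$ is proper, and the set $\{\widetilde\alpha_{x'}(X',Z')\geq\alpha_0\}$ is open by the hypersurface semicontinuity, so it contains $\pi^{-1}(U)$ for some open $U\ni x$. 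With that lemma in hand, parts~(i) and~(ii) follow as you describe by passing the hypersurface restriction theorem and semicontinuity through the fibrewise reduction; for~(ii$_1$) you correctly use that $\pi\vert_{\pi^{-1}(Z)}$ is proper, hence closed, so the image of the sub-$\alpha$ locus is closed in $Z$. One small point worth recording when you write this up: for part~(i) you need $(f_1\vert_H,\dots,f_r\vert_H)$ to again be a regular sequence, which holds because $H$ is a hypersurface containing no component of $Z$, so $Z\cap H$ still has pure codimension $r$ in $H$.
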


\subsection{Higher Du Bois and higher rational singularities}\label{subsec_hbhr}
Let $Z$ be a complex algebraic variety and let $(\underline{\Omega}_Z^{\bullet},F)$ be the Du Bois complex of $Z$, introduced in \cite{DuBois} using hyperresolutions. We list some properties of the Du Bois complex: 
\begin{enumerate}
    \item  $\underline{\Omega}_Z^{\bullet}$ is a resolution of the constant sheaf $\C_Z$;
    \item  $\underline{\Omega}_Z^p:={\rm Gr}_F^p(\underline{\Omega}_Z^{\bullet})[p]$ is a complex of coherent sheaves and is acyclic unless $0\leq p\leq \dim Z$;
    \item there is a natural filtered morphism from the de Rham complex $\Omega_Z^{\bullet}$, with the ``stupid" filtration to $(\underline{\Omega}_Z^{\bullet},F)$; 
    \item the filtered morphism in (c) is filtered isomorphism over the smooth locus of $Z$.
\end{enumerate}    
The Du Bois complex plays a fundamental role in the Hodge theory for singular varieties; see \cite{PetersSteenbrink}*{Chapter~7.3}. 
We say that $Z$ has (at worst) \emph{$k$-Du Bois singularities}, following \cite{Saito_et_al}, if the natural morphism in the bounded derived category of coherent sheaves on $Z$
\[
\Omega_Z^p\to \underline{\Omega}_Z^p.
\] 
is an isomorphism for $0\leq p\leq k$. Clearly, $0$-Du Bois singularities are the same as Du Bois singularities.

We say a proper morphism $\mu\colon\widetilde{Z}\to Z$ from a smooth variety is a \emph{strong log resolution} if $\mu$ is isomorphic over $Z_\reg:=Z\smallsetminus Z_{\rm sing}$ and, $E=\mu^{-1}(Z_{\rm sing})$ is a simple normal crossing divisor. For a non-negative integer $k$, we say that $Z$ has (at worst) 
\emph{$k$-rational} singularities, following  \cite{FL1}, if there exists one (hence for any; see~\cite{MP2}*{Lemma~1.6}) strong log resolution $\mu\colon\widetilde{Z}\to Z$, such that the canonical morphism
\begin{equation}\label{eq_def_k_rat}
\Omega_Z^p\to \R\mu_*\Omega^p_{\widetilde{Z}}({\rm log}\,E)
\end{equation}
is an isomorphism for all $p\leq k$. The $0$-rational singularities are the same as the classical notion of rational singularities. 

In summary, we have the following theorem relating the minimal exponent and higher Du Bois and higher rational singularities:

\begin{thm}\label{thm:medbrat}
Let $X$ be an irreducible and smooth complex algebraic variety and $Z$ be a local complete intersection closed subscheme in $X$ of pure codimension $r$. Then, for any non-negative integer $k$,
\begin{enumerate}
    \item $\widetilde\alpha(X,Z)\geq k+r$ if and only if $Z$ has $k$-Du Bois singularities.
    \item $\widetilde\alpha(X,Z)> k+r$ if and only if $Z$ has $k$-rational singularities.
\end{enumerate}
In particular, if $Z$ has $k$-rational singularities then $Z$ has $k$-Du Bois singularities; and if $Z$ has $k$-Du Bois singularities then $Z$ has $(k-1)$-singularities.
\end{thm}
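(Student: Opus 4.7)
The plan is to derive both equivalences from the hypersurface case via the reduction in Theorem~\ref{thm:cdmo}, together with a Hodge-theoretic identification of the Du Bois complex and the log de Rham complex of a resolution with graded pieces of the de Rham complex of an appropriate filtered $\cD$-module on $X$. First, I would invoke the hypersurface version due to \Mustata--Popa~\cite{MP1}: for a reduced hypersurface $D\subset X$ and any non-negative integer $\ell$, one has $\widetilde\alpha(X,D)\ge \ell+1$ (resp.\ $>\ell+1$) if and only if $D$ has $\ell$-Du Bois (resp.\ $\ell$-rational) singularities. Applied to the auxiliary hypersurface $Z'\subset X'=X\times\mathbf{P}^{r-1}$ from Theorem~\ref{thm:cdmo} with $\ell=k+r-1$, the equality $\widetilde\alpha(X,Z)=\widetilde\alpha(X',Z')$ converts both sides of the desired equivalences into statements about the singularities of $Z'$ on $X'$.

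What then remains is the geometric compatibility: $Z$ has $k$-Du Bois (resp.\ $k$-rational) singularities if and only if $Z'$ has $(k+r-1)$-Du Bois (resp.\ $(k+r-1)$-rational) singularities. I would prove this by pushing forward the relevant complexes along the projection $p\colon Z'\to Z$, which, away from the singular locus of $Z$, is a Zariski-locally trivial $\mathbf{P}^{r-2}$-bundle. Using the projection formula together with standard computations of direct images from projective bundles, one obtains an explicit relation between $\R p_*\underline{\Omega}^{p+r-1}_{Z'}$ and $\underline{\Omega}^{p}_Z$, and an analogous statement for $\R\mu_*\Omega^{\bullet}(\log E)$ on compatible log resolutions; this gives the required shift by $r-1$. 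Essentially this bookkeeping is already performed in~\cite{CDMO} and~\cite{CDM}, and I would reassemble it.

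The main obstacle is the Hodge-theoretic input itself: one must show that the Hodge filtration on the local cohomology $\cH^r_Z\cO_X$, which encodes the minimal exponent through the $V$-filtration condition~\eqref{eq3_intro}, is compatible with the Du Bois complex $\underline{\Omega}^p_Z$ and with $\R\mu_*\Omega^p_{\widetilde Z}(\log E)$ on a strong log resolution $\mu\colon \widetilde Z\to Z$. Once these identifications are in place, applying $\gr^F_{-p}\DR_X$ to the filtration containments appearing in~\eqref{eq3_intro} transfers numerical inequalities on $\widetilde\alpha(X,Z)$ into the isomorphism statements defining $k$-Du Bois and $k$-rational singularities. The ``in particular'' part is then immediate from (1) and (2): $k$-rationality gives $\widetilde\alpha(X,Z)>k+r$, hence $k$-Du Bois; and $k$-Du Bois gives $\widetilde\alpha(X,Z)\ge k+r>(k-1)+r$, hence $(k-1)$-rationality.
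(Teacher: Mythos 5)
This theorem is presented in the paper as background: immediately after the statement, the author writes ``The case when $Z$ is a hypersurface was treated in~\cites{MOPW,Saito\_et\_al,FL1, FL2,MP2} and the case for local complete intersection was obtained in~\cites{MP1, CDMO, CDM},'' and no proof is given. So there is no internal argument to compare against; what you are being asked to reproduce is really the content of those references.

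With that caveat, your sketch has a genuine gap in its central step. You propose to use Theorem~\ref{thm:cdmo} to replace $Z\subset X$ by the hypersurface $Z'\subset X'=X\times\mathbf{P}^{r-1}$, and then deduce the lci case from the hypersurface case via the ``geometric compatibility'' that $Z$ is $k$-Du Bois (resp. $k$-rational) if and only if $Z'$ is $(k+r-1)$-Du Bois (resp. $(k+r-1)$-rational), which you would prove by pushing forward along a projection $p\colon Z'\to Z$ that you describe as a $\mathbf{P}^{r-2}$-bundle away from $Z_\sing$. This geometric picture is not correct. There is no map $Z'\to Z$: the projection $X'\to X$ restricts to a map $Z'\to X$ whose image is all of $X$, with fiber a hyperplane $\mathbf{P}^{r-2}$ over $x\notin Z$ and the full $\mathbf{P}^{r-1}$ over $x\in Z$ (since $g=\sum y_i f_i$ vanishes identically once all $f_i(x)=0$). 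In particular $Z'\supset Z\times\mathbf{P}^{r-1}$, and the locus in $Z'$ lying over $Z$ is not a $\mathbf{P}^{r-2}$-bundle. One also has to track the singular locus of $Z'$ carefully: $Z'$ is smooth over $Z_\reg$ but the Jacobian of $g$ also involves the $f_j$'s themselves, so the relationship between $Z'_\sing$ and $Z_\sing$ requires argument. Because of this, the claimed projection-formula bookkeeping does not go through as stated, and simply saying ``this is already in~\cite{CDMO} and~\cite{CDM}'' is not accurate: those papers do not deduce the lci $k$-Du Bois/$k$-rational equivalence from the hypersurface one via a bundle argument. Rather, \cite{MP1} and \cite{CDM} work directly with the Hodge filtration on the local cohomology $\cH^r_Z(\cO_X)$, comparing $F_{k+r}\,i_*\Q^H_Z[d]$ with $F_k\,\cH^r_Z(\cO_X)$ via the morphism $\tau$ (as recalled in Theorem~\ref{thm:3.1} and Lemma~\ref{lem:dualfw} of this paper), and translate the $V$-filtration condition in~\eqref{eq3_intro} into these Hodge-filtration statements. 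If you want to carry out a genuine proof rather than a citation, that is the route to take; the concluding ``in particular'' deduction from (1) and (2) in your write-up is fine.
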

The case when $Z$ is a hypersurface was treated in~\cites{MOPW,Saito_et_al,FL1, FL2,MP2} and the case for local complete intersection was obtained in~\cites{MP1, CDMO, CDM} 

We recall a useful bound of the codimension of the singular locus for higher Du Bois and higher rational singularities~\cite{MP1}*{Corollary~3.40 and Theorem~F} and \cite{CDM}*{Corollary 1.3}.

\begin{thm}\label{sing_locus_Du_Bois}
Let $Z$ be a local complete intersection variety. If $Z$ has $k$-Du Bois singularities, then ${\rm codim}_Z(Z_{\rm sing})\geq 2k+1$; if $Z$ has $k$-rational singularities then ${\rm codim}_Z(Z_{\rm sing}) \geq 2k+2$.
\end{thm}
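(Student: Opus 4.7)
The plan is to deduce both bounds from the hypersurface analogue of the statement (proved in~\cite{MP1}*{Corollary~3.40 and Theorem~F}) via the graph-embedding trick of Theorem~\ref{thm:cdmo}. Since the assertion is local on $Z$, I may work in an affine chart where $Z$ is cut out in a smooth variety $X$ by a regular sequence $(f_1,\ldots,f_r)$; set $X'=X\times\mathbf{P}^{r-1}$ and let $Z'=\{g=0\}\subset X'$ with $g=y_1f_1+\cdots+y_rf_r$. By Theorem~\ref{thm:cdmo}, $\widetilde{\alpha}(X',Z')=\widetilde{\alpha}(X,Z)$, so applying Theorem~\ref{thm:medbrat} on each side translates the hypothesis that $Z$ has $k$-Du Bois (resp.\ $k$-rational) singularities into the statement that the hypersurface $Z'$ has $(k+r-1)$-Du Bois (resp.\ $(k+r-1)$-rational) singularities.

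Granting the hypersurface case of the theorem applied to $Z'$, this gives
\[
\codim_{Z'}(Z'_{\sing})\geq 2(k+r-1)+1=2k+2r-1
\]
in the $k$-Du Bois case and $\codim_{Z'}(Z'_{\sing})\geq 2k+2r$ in the $k$-rational case. It then remains to compare $Z'_{\sing}$ and $Z_{\sing}$. A direct computation of $dg$ in an affine chart of $\mathbf{P}^{r-1}$ shows that the projection $X'\to X$ sends $Z'_{\sing}$ surjectively onto $Z_{\sing}$, and that the fibre over $x\in Z_{\sing}$ is the projective subspace $\{[y]\in\mathbf{P}^{r-1}:\sum_i y_i\,df_i(x)=0\}$, which is non-empty of dimension $r-1-\rk\bigl(df_1(x),\ldots,df_r(x)\bigr)\leq r-1$. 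Consequently
\[
\dim Z_{\sing}\leq\dim Z'_{\sing}\leq\dim Z_{\sing}+r-1,
\]
and since $\dim Z'=\dim Z+r-1$ we obtain $\codim_{Z'}(Z'_{\sing})\leq \codim_Z(Z_{\sing})+(r-1)$. Combining this with the preceding inequality yields $\codim_Z(Z_{\sing})\geq 2k+r\geq 2k+1$ in the $k$-Du Bois case and $\codim_Z(Z_{\sing})\geq 2k+r+1\geq 2k+2$ in the $k$-rational case.

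The essential content of the argument is therefore the hypersurface version of the theorem, which rests on the analysis of Hodge ideals and is carried out in~\cite{MP1}. Granted that, the reduction via Theorem~\ref{thm:cdmo} is essentially formal; the only mildly delicate input is the surjectivity of the projection $Z'_{\sing}\to Z_{\sing}$ with non-empty projective fibres, which reflects the elementary fact that at any singular point of $Z$ the differentials $df_i$ are linearly dependent and hence admit a non-zero common annihilating direction $[y]\in\mathbf{P}^{r-1}$.
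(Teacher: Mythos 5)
The paper does not prove this statement itself; it simply cites \cite{MP1}*{Corollary 3.40, Theorem F} for the hypersurface case and \cite{CDM}*{Corollary 1.3} for the local complete intersection case. Your reduction of the lci case to the hypersurface case via the graph embedding $Z'\subset X\times\mathbf{P}^{r-1}$ is a reasonable strategy (and is essentially the mechanism behind the cited lci extension), but there is a dimension error that makes your intermediate inequality, and hence your final bound, wrong.

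The issue: $Z'$ is a hypersurface in $X'=X\times\mathbf{P}^{r-1}$, so $\dim Z' = \dim X + (r-1) - 1 = \dim X + r - 2$, while $\dim Z = \dim X - r$. Hence $\dim Z' = \dim Z + 2(r-1)$, not $\dim Z + (r-1)$ as you wrote. Your analysis of the projection $Z'_{\sing}\to Z_{\sing}$ (surjective, with fibres of dimension at most $r-1$) is correct, so the right comparison is
\[
\codim_{Z'}(Z'_{\sing}) \;=\; \dim Z' - \dim Z'_{\sing} \;\leq\; \bigl(\dim Z + 2(r-1)\bigr) - \dim Z_{\sing} \;=\; \codim_Z(Z_{\sing}) + 2(r-1),
\]
not $\codim_Z(Z_{\sing}) + (r-1)$. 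Combining this with $\codim_{Z'}(Z'_{\sing})\geq 2(k+r-1)+1$ gives exactly $\codim_Z(Z_{\sing})\geq 2k+1$ (and $\geq 2k+2$ in the $k$-rational case), which is the statement of the theorem. Your version instead produces $\codim_Z(Z_{\sing})\geq 2k+r$, which is strictly stronger when $r\geq 2$ and is in fact false. A simple counterexample: $Z=V(xy,z)\subset\A^3$ is a codimension-two local complete intersection (a nodal curve, two lines meeting transversally), hence $0$-Du Bois, but $Z_{\sing}=\{0\}$ has codimension $1$ in $Z$, contradicting $\codim_Z(Z_{\sing})\geq 2k+r=2$. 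With the corrected dimension count your argument recovers the stated bound and is a valid deduction from the hypersurface case.
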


Saito's theory of mixed Hodge modules~\cites{Saito_MHP,Saito_MHM} is a convenient tool to study higher Du Bois and higher rational singularities. Let $X$ be an irreducible and smooth $n$-dimensional complex algebraic variety and $Z$ be a pure $d$-dimensional closed subscheme of $X$. Let $i\colon Z\hookrightarrow X$ be the closed inclusion and $r=n-d$ be the codimension of $Z$ in $X$. Let $\Q_Z^H:=a_Z^*\Q^H_{\star}$, as an object in $\rm D^b(\mathrm{MHM}(Z))$ where $a_Z\colon Z\to \star$ is the morphism to a point and $\Q^H_{\star}$ is the Hodge structure of weight $0$ on the field $\Q$. It is a consequence of 
\cite{Saito-HC}*{Theorem~0.2} (see also~\cite{MP1}*{Proposition 5.5} for a simpler proof) that for every $p\in \Z$, we have a natural isomorphism
\begin{equation}\label{eq_Saito_DuBois}
\underline{\Omega}_Z^p[d-p]\cong {\rm Gr}^F_{-p}{\rm DR}_X(i_*\Q_Z^H[d])
\end{equation}
in ${\rm D}^b_{\rm coh}(X)$. In general, $i_*\Q_Z^H[d]=i_*i^*\Q_X^H[d]$ is an object in $\rm D^b(\mathrm{MHM}(X))$ but when $Z$ is a local complete intersection, we know that $i_*\Q_Z^H[d]$ is a mixed Hodge module as $\Q_Z[d]$ is a perverse sheaf. Because of~\eqref{eq_Saito_DuBois}, the scheme $Z$ has $k$-Du Bois singularities if and only if the induced morphism 
\begin{equation}\label{eq:defdb}
\Omega_Z^p[d-p] \to  \underbrace{{\rm Gr}^F_{-p}{\rm DR}_X(i_*\Q_Z^H[d])}_{\cong \underline{\Omega}_Z^p[d-p]}
\end{equation}
is a quasi-isomorphism for every $p\leq k$. 

The log de Rham complex is also related to mixed Hodge modules. Let $\mu\colon \widetilde Z\to Z$ be a strong log resolution such that $E=\mu^{-1}(Z_{\rm sing})$ is a divisor with simple normal crossing support. By the result of~\cite{deligne06} and~\cite{Saito_MHM}*{3.11}, we have
\[
\Omega_{\widetilde Z}^k(\log E)[d-k] \xrightarrow{\cong} \gr^F_{-k}\DR_{\widetilde Z}\big(\cO_{\widetilde Z}(*E)\big).
\]
Let $j'\colon  X\setminus Z_{\sing}\to X$ be the open immersion.  To simplify the notation, denote by $i_*\Q_{Z_{\rm reg}}^H[d]$ the object $j'_*j'^*i_* \Q^H_{Z}[d]$ in the derived category of mixed Hodge modules. There is a natural morphism induced by the adjunction $\id \to j'_*j'^*$ in $\rm D^b({\rm MHM}(X))$,
\begin{equation}\label{eq:singext}
    i_*\Q_Z^H[d]\to i_*\Q_{Z_{\reg}}^H[d].
\end{equation}
Putting $\widetilde j \colon \widetilde Z\setminus E \to \widetilde Z$ for the open immersion, because of 
\[
i_* \Q^H_{Z_{\reg}}[d]=i_*\mu_*\widetilde j_*\Q^H_{\widetilde Z\setminus E}[d]
\]
it follows from~\eqref{eq:direct} that
\begin{equation}\label{eq:logres}
    \begin{aligned}
    &\bR \mu_* \Omega_{\widetilde Z}^k(\log E)[d-k] \cong \bR\mu_*\gr^F_{-k}\DR_{\widetilde Z}\big(\cO_{\widetilde Z}(*E)\big) \\
\cong &\gr^F_{-k}\DR_X\left(\mu_+\cO_{\widetilde Z}(*E)\right) \cong \gr^F_{-k}\DR_X\left(i_*\Q_{Z_{\rm reg}}^H[d]\right)
\end{aligned}
\end{equation}
as $\cO_{\widetilde Z}(*E)$ is the filtered $\cD$-module of the underlying mixed Hodge module $\widetilde j_*\Q^H_{\widetilde Z\setminus E}[d]$. 
Hence, the scheme $Z$ has $k$-rational singularities if and only if 
\[
\Omega^k_Z[d-k] \to \gr^F_{-k}\DR_X\left(i_*\Q_{Z_{\rm reg}}^H[d]\right)
\]
is quasi-isomorphic for $p\leq k$. Deriving from Theorem~\ref{thm:medbrat}, the local complete intersection variety $Z$ has $k$-rational singularities if and only if, $Z$ has $k$-Du Bois singularities and the natural morphism
\begin{equation}\label{eq:def}
\underbrace{\gr^F_{-k}\DR_X\left(i_*\Q_Z^H[d]\right)}_{\cong\underline{\Omega}_Z^k[d-k]} \to \underbrace{\gr^F_{-k}\DR_X\left(i_*\Q_{Z_{\rm reg}}^H[d]\right)}_{\cong\mathbf R\mu_*\Omega^k_{\widetilde Z}(\log E)[d-k]}
\end{equation}
is a quasi-isomorphism. 

\subsection{A preparation lemma} 

We conclude this section by the following criterion of $k$-rational singularities.

\begin{lem}\label{lem:red}
Let $X$ be an irreducible and smooth complex algebraic variety and $Z$ is a local complete intersection closed subscheme of $X$. The closed subscheme $Z$ has $k$-rational singularities if and only if it has $k$-Du Bois singularities and the canonical morphism induced by the dual of~\eqref{eq:def}
\begin{equation}\label{eq:prep}
    \cH^0\bD_X\left(\gr^F_{-k}\DR_X\left(i_*\Q^H_{Z_{\reg}}[d]\right)\right) \to \cH^0\bD_X\left(\gr^F_{-k}\DR_X\left(i_*\Q_{Z}^H[d]\right)\right)
\end{equation}
is a surjection.
\end{lem}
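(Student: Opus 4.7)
The forward direction is straightforward: if $Z$ has $k$-rational singularities then, by the characterization in the paragraph preceding the lemma, the morphism~\eqref{eq:def} is a quasi-isomorphism, so its Grothendieck dual is too, and in particular the map~\eqref{eq:prep} on $\mathcal{H}^0$ is an isomorphism, hence surjective.

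For the converse, assume $Z$ is $k$-Du Bois and that~\eqref{eq:prep} is surjective. Write $\phi$ for the morphism~\eqref{eq:def} and set $C' := \mathrm{cone}(\bD_X\phi)$; we must show $C' = 0$. Applying $\bD_X$ to the excision triangle
\[
i_{s,*}i_s^!(i_*\Q^H_Z[d]) \to i_*\Q^H_Z[d] \to i_*\Q^H_{Z_\reg}[d] \xrightarrow{+1}
\]
for $i_s\colon Z_\sing\to X$, and then applying $\gr^F_k\DR_X$ via~\eqref{eq:dualprop}, one obtains the triangle $\bD_X B \to \bD_X A \to C' \xrightarrow{+1}$ and identifies $C'$ with $\gr^F_k\DR_X$ applied to a complex of mixed Hodge modules supported on $Z_\sing$. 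The same excision (without dualizing) identifies $\mathrm{cone}(\phi)$ as a shift of $\gr^F_{-k}\DR_X(i_{s,*}i_s^!(i_*\Q^H_Z[d]))$, which is dual to $C'$ up to shift via $\bD_X\bD_X=\id$.

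Theorem~\ref{sing_locus_Du_Bois} together with the $k$-Du Bois assumption gives $\dim Z_\sing \leq d-2k-1$. The standard support bound for the graded de Rham complex of a mixed Hodge module complex---cohomology in an interval of length bounded by $\dim Z_\sing$ plus the MHM cohomology amplitude---restricts the cohomological degrees where $\mathcal{H}^i(C')$ can be nonzero. The long exact sequence of the triangle $\bD_X B \to \bD_X A \to C' \xrightarrow{+1}$, combined with the surjectivity hypothesis on~\eqref{eq:prep}, kills $\mathcal{H}^0(C')$. For the remaining negative degrees I would bootstrap via Grothendieck duality: $\bD_X C' \cong \mathrm{cone}(\phi)[-1]$ admits a parallel support bound from the companion excision, and iterating between the two bounds using $\bD_X\bD_X=\id$ should contract the range of potentially nonzero cohomology of $C'$ by two at each step, eventually forcing the range to be empty and hence $C' = 0$.

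The main technical obstacle is the cohomological bookkeeping, in particular handling the MHM cohomology spread of $i_s^!(i_*\Q^H_Z[d])$ (since $i_s^!$ is only left $t$-exact for the perverse $t$-structure), and confirming that each duality iteration genuinely contracts the vanishing range rather than stabilizing. Useful here should be the refined structural results under $k$-Du Bois singularities, such as the reflexivity of $\Omega_Z^k$ and the controlled depth established by \Mustata--Popa, which constrain the Ext sheaves $\cExt^j_{\cO_Z}(\Omega_Z^k,\omega_Z)$ appearing in the cohomology of $\bD_X A$.
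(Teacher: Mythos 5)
Your forward direction is fine. The converse, however, has a genuine gap, and it takes a fundamentally different route from the paper that I don't think can be made to work as stated.

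The paper's proof does not analyze $\mathrm{cone}(\phi)$ directly via excision and support bounds. Instead, it replaces the naive restriction morphism by the weight-truncation morphism $\tau\colon i_*\Q^H_Z[d]\to\cH^r_Z(\cO_X)(r)$ of~\eqref{eq:tau}: the adjunction $\id\to j'_*j'^*$ factors the map $i_*\Q^H_Z[d]\to i_*\Q^H_{Z_\reg}[d]$ through $\tau$, and dualizing this factorization produces the commutative diagram~\eqref{eq:diared}. The dimension bound $\dim Z_\sing\leq d-2k-1$ from Theorem~\ref{sing_locus_Du_Bois} is used only once, to get the acyclicity of $\underline{\Omega}^{d-k}_{Z_\sing}$ and hence the identification~\eqref{eq:lrtrick}, which reduces the surjectivity of~\eqref{eq:prep} to the surjectivity of $\cH^0\gr^F_{k-d}\DR_X(\tau)$. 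The heart of the argument is then the precise Hodge-level control of $\tau$ supplied by Lemma~\ref{lem:dualfw} and Theorem~\ref{thm:3.1} (both imported from [CDM]): since $Z$ is $(k-1)$-rational, $F_{k+r-1}\tau$ is an isomorphism, whence $F_{k+r}\tau$ is injective, so the complex map $\gr^F_{k-d}\DR_X(\tau)$ is a termwise isomorphism except possibly an injection in top degree. The surjectivity hypothesis then forces that top term to be an isomorphism, the five lemma upgrades this to a termwise isomorphism, and [CDM, Lemma~2.1] plus Theorem~\ref{thm:3.1} conclude. None of this structural input is replicated in your proposal.

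The specific step in your proposal that I do not believe can be salvaged is the ``iteration.'' You set $C'=\mathrm{cone}(\bD_X\phi)$, observe $\bD_X C'\cong\mathrm{cone}(\phi)$ up to a shift, and then claim that ``iterating between the two bounds using $\bD_X\bD_X=\id$ should contract the range of potentially nonzero cohomology of $C'$ by two at each step.'' But biduality is the identity on objects; it cannot by itself produce new vanishing. After you use the surjectivity hypothesis to kill $\cH^0(C')$, the remaining cohomology of $C'$ lives in negative degrees, and passing to $\bD_X C'$ merely reflects that interval about zero. To shrink the interval one would need a genuine depth or Cohen--Macaulayness input on each $\cH^i(C')$, i.e.\ exactly the kind of sheaf-theoretic control over $\cExt^j_{\cO_Z}(\Omega^k_Z,\omega_Z)$ that you mention at the end without supplying. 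That control is what Lemma~\ref{lem:dualfw} and Theorem~\ref{thm:3.1} encode, reformulated in terms of the filtered morphism $\tau$ rather than in terms of Ext sheaves. A secondary issue is that your excision uses $i_s^!i_*\Q^H_Z[d]$, which has nontrivial and hard-to-bound perverse cohomology amplitude; the paper avoids this by working with the $j_!$ excision triangle~\eqref{eq:fl} (so that the singular-locus term is $i'_*\Q^H_{Z_\sing}$, whose graded de Rham complex is $\underline{\Omega}^{d-k}_{Z_\sing}$ and visibly acyclic) together with $\tau$.
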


\begin{rmk}
    If we work out~\eqref{eq:prep} using~\eqref{eq:def} and the compatibility of the duality functors~\eqref{eq:dualprop}, assuming that $Z$ has pure dimension $d$, the lemma implies that $Z$ has $k$-rational singularities if and only if $Z$ has $k$-Du Bois singularities and there exists a strong log resolution $\mu\colon \widetilde Z\to Z$ such that the canonical morphism
    \[
         R^k \mu_* \Omega^{d-k}_{\widetilde Z}(\log E)(-E) \to \mathcal{E}xt_{\cO_Z}^k(\underline{\Omega}^k_Z,\omega_Z)
    \]
    is surjective, where $\omega_Z$ is the dualizing sheaf of $Z$.
\end{rmk}

Before giving the proof, we review a construction in~\cite{Saito_MHM}*{4.5.12}, which was used in~\cite{CDM} to study the $k$-rational singularities for local complete intersections. 

Let $Z$ be a local complete intersection closed subscheme of pure dimension $d$ in an irreducible and smooth complex algebraic variety $X$ of dimension $n$. Put $r=n-d$. Then $\Q_Z^H[d]$ has the top weight $d$, while $\cH^r_Z(\cO_X)$ has the lowest weight $n+r$ as constructed in~\cite{Saito_MHM}*{4.5.2}. Here, abusing the notation, we use the $\cD$-module $\cH^r_Z(\cO_X)$ to denote the mixed Hodge module $\cH^r(i_!i^!\Q_X^H[n])$ where $i\colon Z\to X$ is the closed immersion. Indeed, we have
\begin{equation}\label{eq:lcdual}
    \mathbf D_X(i_*\Q^H_Z[d])=i_!i^!\Q_X^H[n+r](n)=\cH^r_Z(\cO_X)(n),
\end{equation}
We also point out that $\gr^W_d i_* \Q^H_{Z}[d]$ is canonically isomorphic to the intersection Hodge module $i_*\mathrm{IC}_Z^H$.

The morphism
\begin{equation}\label{eq:tau}
    \begin{tikzcd}[column sep=small]
\tau\colon    i_* \Q^H_{Z}[d] \arrow[r, twoheadrightarrow, "\gamma_Z"] & \gr^W_d i_* \Q^H_{Z}[d] \arrow[r, "\cong"] & W_{n+r} \cH^r_Z(\cO_X)(r) \arrow[r, hook, "\gamma_Z^\vee"] & \cH^r_Z(\cO_X)(r),
\end{tikzcd}
\end{equation}
obtained by composing the surjection $\gamma_Z$, an isomorphism induced by any polarization 
\[
\gr^W_d i_* \Q^H_{Z}[d] \xrightarrow{\cong} \bD_X\big(\gr^W_d i_* \Q^H_{Z}[d]\big)(d)\cong \left(W_{n+r}\cH^r_Z(\cO_X)\right)(r),
\]
and $\gamma_Z^\vee:=\bD_X(\gamma_Z)(-d)$, plays an important role in this paper. Note that $\tau$ is isomorphic over the complement of the singular locus $X\setminus Z_{\sing}$ and $\bD_X(\tau)=\tau(d)$ by~\eqref{eq:lcdual}. 

Here are two basic facts from~\cite{CDM} regarding the morphism $\tau$.

\begin{lem}[\cite{CDM}*{Lemma 3.5}]\label{lem:dualfw}
If $F_pW_{n+r}\cH^r_Z(\cO_X)=F_p \cH^r_Z(\cO_X)$ for some $p\in \Z$, then the surjective map
\[
F_{p+r+1}\gamma_Z\colon F_{p+r+1}i_*\Q_Z^H[d] \to F_{p+r+1}\gr^W_d i_* \Q^H_{Z}[d]
\]
is an isomorphism; in particular, if $F_{p+r}\tau$ is an isomorphism then $F_{p+r+1}\tau$ is injective.
\end{lem}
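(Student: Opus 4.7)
The plan is to relate the Hodge filtrations on $K := \ker\gamma_Z = W_{d-1}i_*\Q^H_Z[d]$ and its Verdier dual via the short exact sequence
\[
0 \to K \to i_*\Q^H_Z[d] \xrightarrow{\gamma_Z} \gr^W_d i_*\Q^H_Z[d] \to 0.
\]
Applying $\bD_X$, using \eqref{eq:lcdual}, and using the polarization $\bD_X\gr^W_d i_*\Q^H_Z[d] \cong \gr^W_d i_*\Q^H_Z[d](-d)$, one gets
\[
0 \to W_{n+r}\cH^r_Z(\cO_X)(n) \to \cH^r_Z(\cO_X)(n) \to \bD_X K \to 0,
\]
which identifies $\bD_X K$ with $C(n)$, where $C := \cH^r_Z(\cO_X)/W_{n+r}\cH^r_Z(\cO_X)$.

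Next I would translate the hypothesis. The equality $F_pW_{n+r}\cH^r_Z(\cO_X) = F_p\cH^r_Z(\cO_X)$ reads as $F_pC = 0$; by strictness of mixed Hodge module morphisms for the Hodge filtration applied to the dualized sequence, together with the Tate-twist convention $F_q M(n) = F_{q-n}M$, this becomes $F_{p+n}\bD_X K = 0$, equivalently $\gr^F_q \bD_X K = 0$ for all $q \leq p+n$. Feeding this into \eqref{eq:dualprop}: for any $Q \leq p$, every term $\Omega^j_X \otimes \gr^F_{Q+j}\bD_X K$ of the complex $\gr^F_Q\DR_X(\bD_X K)$ vanishes, so the complex is zero in the derived category, and \eqref{eq:dualprop} gives $\bD_X(\gr^F_{-Q}\DR_X K) = 0$, hence $\gr^F_m\DR_X K \simeq 0$ in $\mathrm D^b_{\mathrm{coh}}(\cO_X)$ for every $m \geq -p$.

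The final step extracts $\gr^F_{p+r+1}K = 0$ from this acyclicity, and this is where the codimension of the support of $K$ enters. Since $\gamma_Z$ is an isomorphism over $X \setminus Z_\sing$, $K$ is supported on $Z_\sing$, which has codimension $\geq r+1$ in $X$ because $Z$ has pure dimension $d$. Taking $m = p-d+1$ so that the rightmost term of $\gr^F_m\DR_X K$ is $\omega_X \otimes \gr^F_{p+r+1}K$, and combining the acyclicity above with Saito's theorem that the Rees module $\bigoplus_p F_pK \cdot u^p$ is Cohen--Macaulay of codimension $n$ over the Rees algebra of $\cD_X$, Grothendieck--Serre duality forces $\gr^F_{p+r+1}K = 0$; a downward induction on $q$ (using that $F_qK = 0$ for $q \ll 0$) then yields $F_{p+r+1}K = 0$, i.e., the surjection $F_{p+r+1}\gamma_Z$ is an isomorphism.

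The main obstacle is this last step: converting acyclicity of graded de Rham complexes into the pointwise vanishing of $\gr^F_{p+r+1}K$. The precise codimension bound $\codim_X Z_\sing \geq r+1$, combined with Saito's Cohen--Macaulayness, is what produces the shift $+r+1$ rather than the naive $+n$. The ``in particular'' claim then follows from a short diagram chase: $F_{p+r}\tau$ being an isomorphism simultaneously forces $F_{p+r}K = 0$ (kernel vanishing) and $F_pW_{n+r}\cH^r_Z(\cO_X) = F_p\cH^r_Z(\cO_X)$ (image saturation), so the main statement applied at this $p$ gives $F_{p+r+1}K = 0$, hence $F_{p+r+1}\tau$ is injective.
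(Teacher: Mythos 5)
The paper does not prove this lemma; it imports it verbatim as \cite{CDM}*{Lemma~3.5}. So there is no in-paper argument to compare against, and your proposal has to stand on its own.

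The overall architecture of your argument is reasonable and is very likely the intended one: dualize the sequence $0\to K\to i_*\Q^H_Z[d]\to\gr^W_di_*\Q^H_Z[d]\to 0$, use (\ref{eq:lcdual}) and the polarization to identify $\bD_XK$ with the Tate twist of $C=\cH^r_Z(\cO_X)/W_{n+r}\cH^r_Z(\cO_X)$, translate the hypothesis to $F_{p+n}\bD_XK=0$, and pass through $\gr^F\DR_X$ and (\ref{eq:dualprop}) to get acyclicity of $\gr^F_m\DR_XK$. All of that is correct, and so is the deduction of the ``in particular'' clause from the main claim. (One small inconsistency: the twist $\bD_X\gr^W_d i_*\Q^H_Z[d]\cong\gr^W_d i_*\Q^H_Z[d](-d)$ you quote, combined with $\gr^W_d i_*\Q^H_Z[d]\cong W_{n+r}\cH^r_Z(\cO_X)(r)$, produces $W_{n+r}\cH^r_Z(\cO_X)(2r-n)$, not $W_{n+r}\cH^r_Z(\cO_X)(n)$; the correct twist $(n)$ follows instead directly from (\ref{eq:lcdual}) together with $\gamma_Z^\vee=\bD_X(\gamma_Z)(-d)$. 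This is a sign issue inherited from the paper's polarization formula and does not affect the rest, since your stated conclusion $\bD_XK\cong C(n)$ is the right one.)

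The genuine gap is in the final step. You have acyclicity of $\gr^F_m\DR_XK$ only for $m\geq -p$, and you then choose $m=p-d+1$ (so that $m+n=p+r+1$). But $p-d+1\geq -p$ only when $2p\geq d-1$, so for $p$ below roughly $d/2$ the complex you want to examine lies outside the range where acyclicity was established. The natural elementary induction (let $q_0$ be the smallest index with $\gr^F_{q_0}K\neq 0$; then $\gr^F_{q_0-n}\DR_XK$ is a single nonzero sheaf in degree $0$, so $q_0-n<-p$, i.e.\ $q_0<n-p$) actually produces an \emph{upper} bound $q_0<n-p$ on the first nonvanishing piece, not the lower bound $q_0\geq p+r+2$ one wants, and the codimension estimate $\codim_X Z_\sing\geq r+1$ only gives $q_0\geq r+1$. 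The phrase ``Saito's Cohen--Macaulayness plus Grothendieck--Serre duality forces $\gr^F_{p+r+1}K=0$'' is exactly where the real content of the lemma must be, and as written it is an assertion rather than an argument: it does not specify which $\cExt$-vanishing is being used, how the grading on the CM dual of $\gr^FK$ is compared with the grading on $\gr^F\bD_XK$, or why the codimension of $\Supp K$ converts the crude shift $+n$ into $+r+1$. Until that step is spelled out and shown to cover all $p$ (not just $p\geq (d-1)/2$), the proof is incomplete.
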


It is a direct corollary of~\cite{CDM}*{Theorem 2.3, 2.5 and 3.1} that 

\begin{thm}\label{thm:3.1}
The closed subscheme $Z$ has $k$-rational singularities if and only if $Z$ has $k$-Du Bois singularities and 
\[
F_{k+r}\tau\colon F_{k+r}i_*\Q^H_Z[d]\to F_k \mathcal H^r_Z(\cO_X)
\] 
is an isomorphism.   
\end{thm}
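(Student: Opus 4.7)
My plan is to prove the lemma as an if-and-only-if, relying on Theorem~\ref{thm:3.1} as the main external input from~\cite{CDM}.

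For the forward direction, assume $Z$ has $k$-rational singularities. Then by Theorem~\ref{thm:medbrat} the variety $Z$ has $k$-Du Bois singularities, and the mixed Hodge module characterization~\eqref{eq:def} says precisely that the canonical morphism $\gr^F_{-k}\DR_X(i_*\Q^H_Z[d]) \to \gr^F_{-k}\DR_X(i_*\Q^H_{Z_\reg}[d])$ is an isomorphism in $\mathrm{D^b_{coh}}(X)$. Applying the exact contravariant functor $\cH^0\bD_X$ preserves this isomorphism, so~\eqref{eq:prep} is an isomorphism and in particular surjective.

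For the backward direction, assume $Z$ is $k$-Du Bois and~\eqref{eq:prep} is surjective. By Theorem~\ref{thm:3.1} it suffices to show that $F_{k+r}\tau$ is an isomorphism. Since $Z$ is $k$-Du Bois it is also $(k-1)$-rational by Theorem~\ref{thm:medbrat}, so Theorem~\ref{thm:3.1} applied at level $k-1$ gives that $F_{k+r-1}\tau$ is already an isomorphism. Reading off the factorization~\eqref{eq:tau} of $\tau$ through $\gamma_Z$ and $\gamma_Z^\vee$, this forces $F_{k-1}W_{n+r}\cH^r_Z(\cO_X) = F_{k-1}\cH^r_Z(\cO_X)$, whence Lemma~\ref{lem:dualfw} with $p=k-1$ yields that $F_{k+r}\gamma_Z$ is an isomorphism. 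The remaining task is therefore to show $F_kW_{n+r}\cH^r_Z(\cO_X) = F_k\cH^r_Z(\cO_X)$; equivalently, that $F_kN = 0$, where $N := \gr^W_{>n+r}\cH^r_Z(\cO_X)$ is a mixed Hodge module supported on $Z_\sing$, whose support has codimension at least $2k+r+1$ in $X$ by Theorem~\ref{sing_locus_Du_Bois}.

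The main obstacle is extracting $F_kN = 0$ from the surjectivity of~\eqref{eq:prep}. To carry this out I will factor the adjunction $\alpha\colon i_*\Q^H_Z[d] \to i_*\Q^H_{Z_\reg}[d]$ as $\alpha = \beta \circ \tau$, where $\beta\colon \cH^r_Z(\cO_X)(r) \to i_*\Q^H_{Z_\reg}[d]$ is the composition of the adjunction $\cH^r_Z(\cO_X) \to j'_*j'^*\cH^r_Z(\cO_X)$ with the inverse of the isomorphism $j'_*j'^*\tau$. Applying $\cH^0\bD_X\gr^F_{-k}\DR_X$ and using both the compatibility~\eqref{eq:dualprop} and the self-duality $\bD_X(\tau) = \tau(d)$, the surjectivity of~\eqref{eq:prep} will propagate to surjectivity of the morphism on $\cH^0\gr^F_{k-n}\DR_X(\cdot)$ induced by the adjunction $j'_!j'^*\cH^r_Z(\cO_X) \to \cH^r_Z(\cO_X)$. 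Comparing this with the analogous triangle for $W_{n+r}\cH^r_Z(\cO_X)$ and using that $j'_!j'^*N = 0$ since $N$ is supported in $Z_\sing$, a diagram chase combined with standard vanishing statements for the graded de Rham complex of mixed Hodge modules with support of codimension $\geq 2k+r+1$ should force $F_kN = 0$, completing the proof.
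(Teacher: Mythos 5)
Your proposal does not prove the statement in question. The statement you were asked to prove is Theorem~\ref{thm:3.1}: that $Z$ has $k$-rational singularities if and only if $Z$ has $k$-Du Bois singularities and $F_{k+r}\tau$ is an isomorphism. But you explicitly treat Theorem~\ref{thm:3.1} as a known external input (``relying on Theorem~\ref{thm:3.1} as the main external input from~\cite{CDM}''), invoke it at level $k$ to reduce the backward direction to showing $F_{k+r}\tau$ is an isomorphism, and invoke it again at level $k-1$. What you are actually proving is the subsequent Lemma~\ref{lem:red} (the surjectivity criterion in terms of $\cH^0\bD_X(\gr^F_{-k}\DR_X(\cdot))$), which is a genuinely different assertion and whose proof in the paper does indeed take Theorem~\ref{thm:3.1} as a black box. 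As a proof of Theorem~\ref{thm:3.1} itself, your argument is circular and cannot stand.

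For the record, the paper offers no proof of Theorem~\ref{thm:3.1}: it is stated as ``a direct corollary of~\cite{CDM}*{Theorem 2.3, 2.5 and 3.1}.'' So a blind proof would either need to reconstruct the arguments of \cite{CDM} — which revolve around the filtered strictness of $\tau$, the identification of $\gr^W_d i_*\Q^H_Z[d]$ with the intersection Hodge module, Lemma~\ref{lem:dualfw}, and the characterization of $k$-rationality via~\eqref{eq:def} — or acknowledge it as a citation. Separately, if your intention was to prove Lemma~\ref{lem:red}, your forward direction is fine, and your backward direction is broadly in the spirit of the paper's proof (factoring the adjunction through $\tau$ and using the codimension bound from Theorem~\ref{sing_locus_Du_Bois}), but the crucial step — extracting $F_kN=0$ for $N=\gr^W_{>n+r}\cH^r_Z(\cO_X)$ from the surjectivity of~\eqref{eq:prep} — is left as ``a diagram chase ... should force.'' The paper instead expands the graded de Rham complex of $\tau$ term by term, observes it is an isomorphism in degrees $-n,\dots,-1$ and injective in degree $0$ (via Lemma~\ref{lem:dualfw} and Theorem~\ref{thm:3.1} at level $k-1$), upgrades the degree-$0$ injection to an isomorphism using the surjectivity hypothesis, and then applies the five lemma together with \cite{CDM}*{Lemma 2.1} to conclude that $F_{k+r}\tau$ is an isomorphism. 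That concrete chain of reasoning is what your sketch is missing.
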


\begin{proof}[Proof of Lemma~\ref{lem:red}]
The ``only if'' part is clear by the discussion in the end of Section~\ref{subsec_hbhr}. 

We prove the ``if'' part. Let $i'\colon Z_{\sing}\to Z$ be the closed immersion from the singular locus, $j\colon Z_{\reg} \to Z$ and $j'\colon X\setminus Z_\sing \to X$ be the open immersions. Because the morphism $\tau\colon i_*\Q_Z^H[d] \to \cH^r_Z(\cO_Z)(r)$, constructed as~\eqref{eq:tau}, is isomorphic over $X\setminus Z_\sing$, the natural map $i_*\Q^H_Z[d]\to i_*\Q^H_{Z_\reg}[d]$ is factored through $\tau$ due to the commutative diagram
\begin{equation}\label{eq_factor}
\begin{tikzcd}
    i_*\Q^H_Z[d] \arrow[r,"\tau"]\arrow[d] & \cH^r_Z(\cO_X)(r) \arrow[d]\arrow[dl,"\varepsilon"] \\
    i_*\Q^H_{Z_\reg}[d] \arrow[r, equal] & j'_*j'^*\cH^r_Z(\cO_X)(r)
\end{tikzcd}
\end{equation}
obtained by applying adjunction $\id \to j'_*j'^*$ to $\tau$, recalling that $i_*\Q^H_{Z_\reg}[d]:=j'_*j'^*i_*\Q^H_{Z}[d]$. Taking the dual, combined with~\eqref{eq:dualprop} gives the following commutative diagram:
\begin{equation}\label{eq:diared}
\begin{tikzcd}
    \gr^F_{k-d}\DR_X \left(i_*j_! \Q^H_{Z_\reg}\right) \arrow[r, equal] \arrow[d,"\cong \, \text{ by}~\eqref{eq:lrtrick}"] & \bD_X\left(\gr^F_{-k}\DR_X\left(i_*\Q^H_{Z_{\reg}}[d]\right)\right) \arrow[d, "\text{induced by }\varepsilon"] \\ 
     \gr^F_{k-d}\DR_X(i_*\Q_Z^H[d]) \arrow[r, equal] \arrow[d,"\gr^F_{k-d}\DR_X(\tau)"] & \bD_X\left(\gr^F_{-k-r}\DR_X\left(\cH^r_Z(\cO_X)\right)\right) \arrow[d,"\text{induced by }\tau"] \\
    \gr^F_{k-n}\DR_X \big(\cH^r_Z(\cO_X)\big) \arrow[r,equal] & \bD_X\left(\gr^F_{-k}\DR_X\left(i_*\Q_{Z}^H[d]\right)\right) 
\end{tikzcd}.
\end{equation}

The morphism $\eta=\bD_X(\varepsilon)[-d](-d)$, can be fitted into the distinguished triangle in $\rm D^b(\mathrm{MHM}(Z))$:
\begin{equation}\label{eq:fl}
  i_*j_!\Q^H_{Z_\reg} \xrightarrow{\eta} i_*\Q^H_Z \to i_*i'_*\Q_{Z_{\sing}}^H \xrightarrow{+1},   
\end{equation}
due to $i_*j_!\Q^H_{Z_\reg}[d](d)=\bD_X\left(i_*\Q^H_{Z_\reg}[d]\right)$ and $i_*\Q^H_Z[d](d)=\bD_X(\cH^r_Z(\cO_X)(r))$.

Because $Z$ has $k$-Du Bois singularities, $\dim Z_{\sing}\leq d- 2k-1<d-k$ thanks to Theorem~\ref{sing_locus_Du_Bois}. Hence, it follows from~\eqref{eq_Saito_DuBois} that $\gr^F_{k-d} \DR_X(i_* i'_*\Q^H_{Z_{\sing}}) {\cong} \underline{\Omega}^{d-k}_{Z_{\sing}}$ is acyclic. Then by~\eqref{eq:fl}, we have 
\begin{equation}\label{eq:lrtrick}
    \gr^F_{k-d}\DR_X(i_*j_! \Q^H_{Z_\reg}) \xrightarrow{\cong} \gr^F_{k-d}\DR_X(i_*\Q^H_Z).     
\end{equation}
This isomorphism has already been observed in~\cites{FL2,MP2}.

Therefore, via the diagram~\eqref{eq:diared}, the surjectivity of~\eqref{eq:prep} is equivalent to that
\begin{equation}\label{eq:h0}
\cH^0\gr^F_{k-d}\DR_X(\tau) \colon \cH^0\gr^F_{k-d}\DR_X(i_*\Q_Z^H[d]) \to \cH^0\gr^F_{k-n}\DR_X(\cH^r_Z\big(\cO_X)\big)
\end{equation}
is surjective.
 
 As $Z$ has $k$-Du Bois singularities (in particular, $(k-1)$-rational singularities), if we expand the morphism of complexes 
\begin{equation}\label{eq:cx}
\gr^F_{k-d}\DR_X(\tau) \colon \gr^F_{k-d}\DR_X\left(i_*\Q_Z^H[d]\right) \to \gr^F_{k-n}\DR_X\big(\cH^r_Z(\cO_X)\big)
\end{equation}
as the commutative diagram:
\[
\begin{tikzcd}[column sep=small]
    0 \arrow{r} & \gr^F_{k-d}(i_*\Q_Z^H[d]) \arrow[r]\arrow[d] & \cdots \arrow[r] & \omega_X\otimes \gr^F_{k+r}(i_*\Q_Z^H[d]) \arrow[d] \arrow{r} & 0 \\
   0\arrow{r} & \underbrace{\gr^F_{k-n}\big(\cH^r_Z(\cO_X)\big)}_{{\rm degree}\, -n} \arrow[r] & \cdots \arrow[r] &  \underbrace{\omega_X\otimes\gr^F_{k}\big(\cH^r_Z(\cO_X)\big)}_{{\rm degree}\, 0} \arrow{r} & 0
\end{tikzcd}
\]
we see that~\eqref{eq:cx} is isomorphism in cohomological degree $-n,-n+1,\dots,-1$ and, is injective in cohomological degree $0$ by Lemma~\ref{lem:dualfw} and Theorem~\ref{thm:3.1}. Therefore, the morphism~\eqref{eq:h0} is an isomorphism. Then an application of the $5$-lemma implies that~\eqref{eq:cx} is a term-wise isomorphism, which gives that $F_{k+r}\tau$ is an isomorphism because of~\cite{CDM}*{Lemma 2.1}. Hence, we conclude the proof by Theorem~\ref{thm:3.1}. 
\end{proof}

\section{Proof of Main Results}

\subsection{Some reductions}\label{subsec:red} 
We now proceed to prove Theorem~\ref{thm:inme}. The argument in this section is pointed out by Mircea \Mustata to the author. We are now in the following setting of Theorem~\ref{thm:inme}: 

\begin{set}\label{set}
    Assume that $X$ is an irreducible and smooth complex algebraic variety, $Z$ is a local complete intersection closed subscheme in $X$, $H$ is a hypersurface of $X$ containing no irreducible component of $Z$ and $c$ is a positive rational number such that 
\begin{equation}\label{eq:cond}
    \widetilde\alpha(X\setminus H,Z\setminus H)>c \quad  \text{and} \quad \widetilde\alpha(X,Z\cap H)\geq c+1.
\end{equation}
\end{set}

We first perform a useful reduction:

\begin{lem}\label{lem:reduction}
To prove Theorem~\ref{thm:inme}, it suffices to assume that the hypersurface $H$ is smooth.
\end{lem}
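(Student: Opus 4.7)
My plan is to reduce to smooth $H$ via a graph trick: after localizing so that $H$ is defined by a regular function, one replaces the hypersurface by the graph of that function in an auxiliary $\A^1$-direction, where it is automatically smooth. Since $\widetilde\alpha(X,Z)=\min_i\widetilde\alpha(U_i,Z\cap U_i)$ over an affine open cover of $X$ and both conditions in~\eqref{eq:cond} restrict to each $U_i$, I would first reduce to the case where $X$ is affine and $H=V(f)$ for a regular function $f\in\cO_X(X)$.

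I would then introduce
\[
X':=X\times\A^1_t,\qquad Z':=Z\times\{0\}\subset X',\qquad H':=V(f+t)\subset X'.
\]
Here $H'$ is smooth (as $\partial_t(f+t)=1$) and the first projection restricts to an isomorphism $H'\cong X$; $Z'$ is a local complete intersection in $X'$; and $H'$ contains no irreducible component of $Z'$ because $H$ contains none of $Z$. The next step is to verify the hypotheses of Theorem~\ref{thm:inme} for $(X',Z',H')$ with the shifted constant $c+1$. Using Remark~\ref{rem:ind} and the identification $Z'\cap H'=(Z\cap H)\times\{0\}\cong Z\cap H$, I get
\[
\widetilde\alpha(X',Z'\cap H')=\widetilde\alpha(X,Z\cap H)+1\geq c+2.
\]
For the open part, $Z'\setminus H'=(Z\setminus H)\times\{0\}$; at every point $(z,0)$ in this set, $f(z)\neq 0$ forces a small product neighborhood $U\times V$ with $U\subset X\setminus H$ around $z$ and $V$ a small disk around $0\in\A^1$ to lie in both $X'\setminus H'$ and $(X\setminus H)\times\A^1$. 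Hence, by locality of $\widetilde\alpha$ and Remark~\ref{rem:ind},
\[
\widetilde\alpha_{(z,0)}(X'\setminus H',Z'\setminus H')=\widetilde\alpha_z(X\setminus H,Z\setminus H)+1>c+1,
\]
and since the set of such local values is finite by Theorem~\ref{thm3_intro}~ii), the minimum is attained, yielding $\widetilde\alpha(X'\setminus H',Z'\setminus H')>c+1$.

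Finally, I would invoke the (assumed) smooth-$H$ case of Theorem~\ref{thm:inme} applied to $(X',Z',H')$ with constant $c+1$ to obtain $\widetilde\alpha(X',Z')>c+1$; applying Remark~\ref{rem:ind} once more to the embeddings $Z\hookrightarrow X$ and $Z\cong Z\times\{0\}\hookrightarrow X'$ gives $\widetilde\alpha(X',Z')=\widetilde\alpha(X,Z)+1$, so $\widetilde\alpha(X,Z)>c$. The main technical point I expect is the open-part verification above: the opens $X'\setminus H'$ and $(X\setminus H)\times\A^1$ are distinct subsets of $X'$, so the transfer of the hypothesis from one to the other relies on the local nature of the minimal exponent and on the point-by-point choice of product neighborhoods contained in their intersection.
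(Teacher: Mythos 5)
Your proposal is correct, but it takes a genuinely different route from the paper's. The paper perturbs $H$ in a pencil: after reducing (via the Restriction Theorem, Theorem~\ref{thm3_intro}(i)) to the case where the locus $\Sigma=\{x\in Z:\widetilde\alpha_x(X,Z)\le c\}$ is a single point $P$, it forms the family $H_t=V(t\,w+(1-t)\,h)$ through $P$ with $W=V(w)$ a general hyperplane, and uses the semicontinuity Theorem~\ref{thm3_intro}(ii) applied to $\mathcal S\subset X\times\A^1\to\A^1$ to transfer the hypothesis $\widetilde\alpha_P(X,Z\cap H)\geq c+1$ to the smooth nearby fiber $H_t$. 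You instead replace $(X,Z,H)$ by the cylinder $(X\times\A^1,\,Z\times\{0\},\,V(f+t))$ so that the new hypersurface is automatically smooth, and you push the hypotheses through using only the embedding-dependence of the minimal exponent (Remark~\ref{rem:ind}) and its local nature. Your argument therefore avoids both the Restriction Theorem and the full force of semicontinuity; the cost is one extra ambient dimension and a uniform shift of the threshold $c\mapsto c+1$, which is harmless. Your transfer of the open-side hypothesis is sound, and in fact can be stated more simply than you do: since $(z,0)\notin H'$, the local invariant $\widetilde\alpha_{(z,0)}(X'\setminus H',Z'\setminus H')$ agrees with $\widetilde\alpha_{(z,0)}(X',Z')=\widetilde\alpha_z(X,Z)+1=\widetilde\alpha_z(X\setminus H,Z\setminus H)+1$ purely from locality and Remark~\ref{rem:ind}, with no need for analytic ``disk'' neighborhoods; the passage from the pointwise bound to the global bound $\widetilde\alpha(X'\setminus H',Z'\setminus H')>c+1$ does need, as you note, the finiteness of the set of local minimal exponents so that the global value is the minimum of the local ones. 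All steps check out; the approaches are different but each serves: the paper's yields a reduction that also shrinks the bad locus to a point (useful for intuition), while yours is a cleaner one-shot reduction relying on fewer input theorems.
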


\begin{proof}
Suppose that we are in Setting~\ref{set} but the subset $\Sigma\subset Z$ consisting of the points $x\in Z$ such that $\widetilde\alpha_x(X,Z)\leq c$ is non-empty. By the definition~\eqref{eq:locme} of local minimal exponent, we know that $\Sigma$ is a closed subvariety. 

To achieve a contradiction, we can assume that $\Sigma$ has dimension $0$ because cutting down $(X,Z,H)$ by general hyperplanes passing through $\Sigma$ does not change $\widetilde\alpha_x(X,Z)$ for $x$ in the hyperplane sections according to Theorem~\ref{thm3_intro}(i). Shrinking $X$ if necessary, we can further assume that $\Sigma$ contains exactly one point $P$, $Z$ is cut out by a regular sequence $(f_1,f_2,\dots,f_r)$ and $H$ is defined by a regular function $h$. Let $W$ be a general smooth hyperplane, defined by a regular function $w$, passing through the point $P$. Consider the family of hyperplane sections of $Z$: $\mathcal{S}\subset X\times \A^1$ defined by the regular functions $f_1,f_2,\dots,f_r,t\cdot w+(1-t)\cdot h$ where $t$ is the coordinate on $\A^1$. Denote by the general fiber $\mathcal S_t=Z\cap H_t$ where $H_t$ is the hypersurface defined by $t\cdot w+(1-t)\cdot h$ in $X$. Note that the central fiber of $\mathcal S\to \A^1$ is $\mathcal S_0=Z\cap H$.

Applying Theorem~\ref{thm3_intro}(ii) to the second projection $X\times \A^1\to \A^1$ with the section $s\colon \A^1\to X\times \A^1, \, t\mapsto (P,t)$ and the local complete intersection closed subscheme $\mathcal S\subset X\times \A^1$, we deduce that
\[
\widetilde\alpha_P(X,\mathcal{S}_t) \geq \widetilde\alpha_P(X,\mathcal{S}_0)\geq c+1
\]
holds for any $t$ in a neighborhood $U\subset \A^1$ of $0$. Because $X\setminus H_t$ does not contain $P$,  the hypersurface $H_t$ is smooth and contains no irreducible component of $Z$ for general $t\in U$, if we replace $H$ by a general $H_t$ we are still in Setting~\ref{set}. 
Hence, if we have Theorem~\ref{thm:inme} for the tuple $(X,Z,H_t,c)$, then $\widetilde \alpha_P(X,Z)>c$, which is a contradiction. Thus, the closed subvariety $\Sigma$ is empty, i.e. $\widetilde\alpha(X,Z)>c$. 
\end{proof}

Theorem~\ref{thm:inme} can be further reduced to the following special case of Theorem~\ref{thm:inratbd}: 

\begin{thm}\label{thm:strinratbd}
Let $Z$ be a hypersurface of dimension $d$ in an irreducible and smooth complex algebraic variety $X$ of dimension $n=d+1$. If there is a smooth hypersurface $H$ in $X$ containing no irreducible component of $Z$ and a non-negative integer $k$ such that $Z\setminus H$ has $k$-rational singularities and that $Z\cap H$ has $k$-Du Bois singularities then $Z$ has $k$-rational singularities.
\end{thm}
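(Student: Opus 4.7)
\emph{Step 1 ($Z$ is $k$-Du Bois).} At points of $Z\setminus H$ this is immediate from the $k$-rationality hypothesis. For $x\in Z\cap H$, Remark~\ref{rem:ind} applied to $Z\cap H\subset H\subset X$ gives
\[\widetilde\alpha_x(H,Z\cap H)=\widetilde\alpha_x(X,Z\cap H)-1\geq k+1,\]
using that $Z\cap H$ is $k$-Du Bois (equivalently, as an l.c.i.\ of codimension $2$ in $X$, $\widetilde\alpha_x(X,Z\cap H)\geq k+2$). The Restriction Theorem~\ref{thm3_intro}(i), applied to the smooth hypersurface $H\subset X$, then yields $\widetilde\alpha_x(X,Z)\geq\widetilde\alpha_x(H,Z\cap H)\geq k+1$, so $Z$ is $k$-Du Bois by Theorem~\ref{thm:medbrat}.

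\emph{Step 2 (reduction to a vanishing localized on $H$).} By Lemma~\ref{lem:red} it remains to prove the surjectivity
\[\cH^0\bD_X\bigl(\gr^F_{-k}\DR_X(i_*\Q^H_{Z_\reg}[d])\bigr)\twoheadrightarrow\cH^0\bD_X\bigl(\gr^F_{-k}\DR_X(i_*\Q^H_Z[d])\bigr).\]
Let $C$ be the cone of $i_*\Q^H_Z[d]\to i_*\Q^H_{Z_\reg}[d]$ in $\rm D^b(\mathrm{MHM}(X))$. The long exact sequence attached to the distinguished triangle $\bD_X\gr^F_{-k}\DR_X(C)\to \bD_X\gr^F_{-k}\DR_X(i_*\Q^H_{Z_\reg}[d])\to \bD_X\gr^F_{-k}\DR_X(i_*\Q^H_Z[d])\xrightarrow{+1}$ reduces the surjectivity to the vanishing $\cH^1\bD_X\gr^F_{-k}\DR_X(C)=0$. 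Put $u\colon U=X\setminus H\hookrightarrow X$ and $v\colon H\hookrightarrow X$. The $k$-rationality of $Z\setminus H$ combined with the characterization~\eqref{eq:def} says that $\gr^F_{-k}\DR_X(C)$ vanishes over $U$. Equivalently, applying the excision triangle $u_!u^*\to\id\to v_*v^*\xrightarrow{+1}$ to $C$ and the functor $\gr^F_{-k}\DR_X$, we obtain $\gr^F_{-k}\DR_X(u_!u^*C)=0$ and hence $\bD_X\gr^F_{-k}\DR_X(C)\cong\bD_X\gr^F_{-k}\DR_X(v_*v^*C)$: the problem is localized onto $H$.

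\emph{Step 3 (vanishing on $H$ from $k$-Du Bois of $Z\cap H$).} Since $v$ is a proper immersion of smooth varieties, \eqref{eq:direct} and the compatibility of $\bD$ with $v_*$ reduce the vanishing to $\cH^1\bD_H\gr^F_{-k}\DR_H(v^*C)=0$. Base change for the smooth closed embedding $v$ (using that $H$ contains no component of $Z$, so $Z$ and $H$ intersect properly) identifies $v^*(i_*\Q^H_Z[d])$ and $v^*(i_*\Q^H_{Z_\reg}[d])$ with natural Hodge modules built from the hypersurface $Z\cap H\subset H$, up to a shift and a Tate twist. Combining Theorem~\ref{sing_locus_Du_Bois} (bounding $\codim(Z\cap H)_{\sing}\geq 2k+1$) with Lemma~\ref{lem:dualfw} and repeating the proof of Lemma~\ref{lem:red}---now on $H$, with $Z\cap H$ in place of $Z$---one extracts the required $\cH^1$-vanishing from the hypothesis that $Z\cap H$ is $k$-Du Bois.

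\emph{Main obstacle.} The delicate point is Step~3. The subtlety is that $Z_\reg\cap H$ need not equal $(Z\cap H)_\reg$ when $H$ fails to be transverse to $Z$ at some smooth points of $Z$; thus after base change, $v^*(i_*\Q^H_{Z_\reg}[d])$ is not literally the Hodge module attached to the regular locus of $Z\cap H$. Following the strategy of Schwede~\cite{Sch}, the way to handle this is to work with $v^*C$ directly as a complex of Hodge modules on $H$ and to exploit the compatibility of Grothendieck duality with the Hodge filtration---in particular the isomorphism~\eqref{eq:lrtrick} and the filtration-level criterion of Theorem~\ref{thm:3.1}---to extract the needed vanishing purely from the $k$-Du Bois property of $Z\cap H$, rather than attempting to identify the two candidate ``regular-locus'' Hodge modules on $H$.
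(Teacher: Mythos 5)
Your Step 1 matches the paper's opening move (Restriction Theorem plus Theorem~\ref{thm:medbrat}), and your Step 2 reduction of Lemma~\ref{lem:red} to the vanishing $\cH^1\bD_X\gr^F_{-k}\DR_X(C)=0$ is a valid (stronger) sufficient condition. From there, however, the argument diverges from the paper and contains two genuine gaps.

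\emph{Gap in Step 2.} The inference "$\gr^F_{-k}\DR_X(C)$ vanishes over $U$, therefore $\gr^F_{-k}\DR_X(u_!u^*C)=0$" is not a formal consequence. The Hodge filtration on $u_!$ of a mixed Hodge module is \emph{not} supported purely by data on $U$: by~\eqref{eq:ext} it is built from $\partial_t^i\cdot F_{\ell-i}V^1(\cdot)$ using the $V$-filtration along $H$, and $\gr^F\DR_X(u_!u^*C)$ can therefore have nonzero contributions along $H$ even when its restriction to $U$ is acyclic. Exactly this subtlety is why the paper needs Lemma~\ref{lem:ext}, which proves via bistrictness of the Hodge and $V$-filtrations that isomorphism of $F_{\ell+1}j^!\tau$ (for $\ell\leq k$) implies isomorphism of $F_{\ell+1}j_!\tau$; note that even there the conclusion is about the specific map $\tau$, relying on $\ker\tau$ and $\coker\tau$ being extensions from $U$, not a general acyclicity-preservation statement for $j_!$. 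Your excision step would need an analogous, nontrivial argument to be justified, and as stated it is an unsubstantiated claim.

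\emph{Gap in Step 3.} You correctly identify the "main obstacle" — that $v^*(i_*\Q^H_{Z_\reg}[d])$ is not the Hodge module of $(Z\cap H)_\reg$ — but the proposed remedy (work with $v^*C$ directly, invoke~\eqref{eq:lrtrick} and Theorem~\ref{thm:3.1}) is a statement of intent rather than a proof. Theorem~\ref{thm:3.1} and Lemma~\ref{lem:red} characterize \emph{$k$-rationality}, whereas the hypothesis on $Z\cap H$ is only \emph{$k$-Du Bois}; the mechanism by which that weaker hypothesis produces the required $\cH^1$-vanishing on $H$ is not exhibited. The paper handles this point entirely differently: rather than localizing the problem onto $H$, it factors $i_*\Q^H_Z[d]\to i_*\Q^H_{Z_\reg}[d]$ through $i_*\Q^H_{Z\setminus H}[d]$ and $i_*\Q^H_{Z\setminus T}[d]$ (with $T=H\cup Z_\sing$), introduces the auxiliary module $M=\cH^{-1}i_*i^*j_*j^*\Q^H_X[n]$, and splits the hard map into $\gamma\colon i_*\Q^H_Z[d]\to M$ (a short exact sequence~\eqref{eq:M} whose third term is $i_{D*}\Q^H_D[d-1](-1)$ — this is precisely where $k$-Du Bois of $D=Z\cap H$ enters) and $\delta\colon M\to i_*\Q^H_{Z\setminus H}[d]$, for which it constructs an explicit left inverse using the log-comparison Lemma~\ref{lem:logcomapre} for a log resolution of $(X,Z+H)$ and a torsion-freeness/reflexivity argument. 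None of this machinery appears in your sketch, and it is not clear how to avoid it.

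In short: the framing through the cone $C$ and excision onto $H$ is an appealing idea, but the two localization claims on which it rests require substantial work comparable to what the paper actually does (bistrictness along $H$ and a left-inverse construction via log resolutions), and as written the proposal does not supply it.
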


\begin{proof}[Proof of Theorem~\ref{thm:inme} assuming Theorem~\ref{thm:strinratbd}]
Suppose we are in the Setting~\ref{set}. We can assume that $H$ is smooth by Lemma~\ref{lem:reduction}. Since the statement is local, we may and will assume that there is a regular sequence  $(f_1,f_2,\dots,f_r)$ cutting out $Z$. Let $X':=X\times \P^{r-1}$ and let $Z'$ be the hypersurface in $X'$ defined by 
\[
g=y_1f_1+y_2f_2+\cdots +y_rf_r
\]
where $[y_1:y_2:\dots:y_r]$ are homogeneous coordinates on $\P^{r-1}$.
Then by Theorem~\ref{thm:cdmo}, we have $\widetilde\alpha(X,Z)=\widetilde\alpha(X',Z')$. 

Denote by $H'$ the smooth hypersurface $H\times \P^{r-1}$ in $X'$. Clearly, $H'$ contains no irreducible component of $Z'$. By Theorem~\ref{thm:cdmo} again, we also have,
\[
\widetilde\alpha(X'\setminus H', Z'\setminus H')=\widetilde\alpha(X\setminus H,Z\setminus H)>c.
\] 
On the other hand, since $Z'\cap H'$ is defined by
\[
g|_{H'}=y_1f_1|_H+y_2f_2|_H+\cdots +y_rf_r|_H
\] 
Another application of Theorem~\ref{thm:cdmo} gives $\widetilde\alpha(H',Z'\cap H')=\widetilde\alpha(H,Z\cap H)$.
Then, as $H$ and $H'$ are smooth, together with Remark~\ref{rem:ind}
\[
\begin{aligned}
    &\widetilde\alpha(X',Z'\cap H') =\widetilde\alpha(H',Z'\cap H')+1 \\
=&\widetilde\alpha(H,Z\cap H)+1=\widetilde\alpha(X,Z\cap H)\geq c+1.
\end{aligned} 
\] 
Hence, replacing $(X,Z,H)$ by $(X',Z',H')$ we are still in the Setting~\ref{set}; together with $\widetilde\alpha(X,Z)=\widetilde\alpha(X',Z')$, we may and will assume that $Z$ is a hypersurface in $X$. 

If $c$ is a positive integer, writing $c=k+1$, Theorem~\ref{thm:medbrat} implies that $Z\setminus H$ has $k$-rational singularities and $Z\cap H$ has $k$-Du Bois singularities. Theorem~\ref{thm:strinratbd} shows that $Z$ has $k$-rational singularities and thus, $\widetilde\alpha(X,Z)>c$. 

Otherwise, assume that $Z$ is defined by a regular function $f$. Suppose that $\lceil c \rceil -c=\frac{m}{N}$ for some positive integers $m$ and $N$. Let $Z''$ be the hypersurface in $X'':=X\times \A^m$ defined by 
\[
f+ w_1^N+w_2^N +\cdots + w_m^N,
\] 
where $(w_1,w_2,\dots,w_m)$ is a system of coordinates on $\A^m$. Note that the singular locus $Z''_{\sing}$ of $Z''$ is exactly $Z_{\sing}\times\{0\}$; in particular, for $x\in Z$ and $y\in \A^m$, $\widetilde\alpha_{(x,y)}(X'',Z'')=\infty$ unless $x\in Z_{\sing}$ and $y=0$. Also, by the Thom-Sebastiani theorem for minimal exponents~\cite{Saito_microlocal}, combined with the fact that $\widetilde\alpha_0(w^N_i)=\frac{1}{N}$, we have
\begin{equation}\label{eq:ts}
\widetilde\alpha_{(x,0)}(X'',Z'')=\widetilde\alpha_x(X,Z)+\frac{m}{N},    
\end{equation}
holds for any $x\in Z_{\sing}$. Denote by $H''$ the hypersurface $H\times \A^m$ in $X''$ which clearly contains no irreducible component of $Z''$. It follows from the inequality
\[
 \widetilde\alpha_{(x,0)}(X'',Z'')=\widetilde\alpha_x(X,Z)+\frac{m}{N} > c+\frac{m}{N}=\lceil c\rceil,
\]
for any point $(x,0)$ in $Z''_\sing \setminus H''$ that $\widetilde\alpha(X''\setminus H'',Z''\setminus H'')> \lceil c\rceil$. On the other hand, since $Z''\cap H''$ is defined by
\[
f|_H+w_1^N+w_2^N +\cdots + w_m^N,
\]
whose singular locus is exactly $(Z\cap H)_{\sing}\times \{0\}$, we can apply the Thom-Sebastiani theorem again for $Z''\cap H''$:  
\[
\widetilde\alpha_{(x,0)}(H'',Z''\cap H'')=\widetilde\alpha_x(H,Z\cap H)+\frac{m}{N}
\]
for any $x\in (Z\cap H)_{\sing}$. Arguing as above we find that
\[
\widetilde\alpha(H'',Z''\cap H'') \geq c+\frac{m}{N}=\lceil c\rceil.
\] 
Theorem~\ref{thm:strinratbd}, combined with Theorem~\ref{thm:medbrat}, applying to $(X'',Z'',H'')$ and $\lceil c\rceil$ implies that $\widetilde\alpha(X'',Z'')> \lceil c\rceil$; in particular, for any singular point $x$ of $Z$, we have 
\[
\widetilde\alpha_x(X,Z)=\widetilde\alpha_{(x,0)}(X'',Z'')-\frac{m}{N}>\lceil c\rceil-\frac{m}{N}=c
\]
by~\eqref{eq:ts}, which completes the proof.
\end{proof}

\subsection{Proof of Theorem~\ref{thm:strinratbd}}
By the Restriction Theorem for the minimal exponent (Theorem~\ref{thm3_intro}(i)) and the assumption that $Z\setminus H$ is $k$-rational, the subscheme $Z$ has $k$-Du Bois singularities. It suffices to prove that the natural map
\[
\cH^0\bD_X\left(\gr^F_{-k}\DR_X\left(i_*\Q^H_{Z_{\reg}}[d]\right)\right) \to \cH^0\bD_X\left(\gr^F_{-k}\DR_X\left(i_*\Q_{Z}^H[d]\right)\right)
\]
is surjective, thanks to Lemma~\ref{lem:red}.

\subsubsection{} 
Let $T$ be the union of $H$ with the singular locus $Z_{\sing}$ of $Z$. The Cartesian diagram of open immersions of varieties
\[
\begin{tikzcd}
X\setminus T \arrow{r}\arrow{d}\arrow{dr}{j_T} &  X\setminus H \arrow{d}{j} \\
X\setminus Z_{\sing} \arrow{r}{j'} & X 
\end{tikzcd}
\]
induces the following commutative diagram in $\rm D^b(\mathrm{MHM}(X))$:
\[
\begin{tikzcd}
i_*\Q^H_{Z \setminus T}[d]  &  i_*\Q^H_{Z\setminus H}[d]  \arrow{l}{\alpha} \\
 i_*\Q^H_{Z_{\reg}}[d] \arrow{u} &  i_*\Q^H_Z[d] \arrow{u}{\beta} \arrow{l}
\end{tikzcd}
\]
Here, $i:Z\to X$ is the closed immersion and, abusing the notation, denote by $i_*\Q^H_{Z\setminus H}[d]$ the object $j_*j^*i_*\Q^H_Z[d]$, by $i_*\Q^H_{Z_{\reg}}[d]$ the object ${j'}_*j'^*i_*\Q^H_{Z}[d]$ and by $i_*\Q^H_{Z\setminus T}[d]$ the object ${j_T}_*j_T^*i_*\Q^H_{Z}[d]$ in $\rm D^b(\mathrm{MHM}(X))$. It suffices to prove that 
\[
\cH^0\bD_X\left(\gr^F_{-k}\DR_X(\alpha)\right) \quad \text{and} \quad \cH^0\bD_X\left(\gr^F_{-k}\DR_X(\beta)\right)
\]
are both surjections.

\subsubsection{} 
Using the compatibility between the two duality functors~\eqref{eq:dualprop}, we see that $\bD_X\left(\gr^F_{-k}\DR_X\left(\alpha\right)\right)$ is the same as 
\[
\gr^F_{k}\DR_X(\bD_X(\alpha)) \colon \gr^F_{k-d}\DR_X\left({j_{T}}_!i_*\Q^H_{Z\setminus T}[d]\right)\to \gr^F_{k-n} \DR_X\left(j_!\cH^1_Z(\cO_X)\right).
\] 
We shall prove that this is a quasi-isomorphism. For this, we make use of the morphism 
\[
\tau\colon i_*\Q^H_{Z}[d] \to \cH^1_Z(\cO_X)(1) 
\] 
constructed as~\eqref{eq:tau}. As $j_T^!\tau$ is identity, we have the factorization 
\[
\begin{tikzcd}[row sep=small]
    {j_{T}}_!i_*\Q^H_{Z\setminus T}[d] \arrow[r]\arrow[d, equal] & j_!i_*\Q_{Z\setminus H}^H[d] \arrow[r,"j_!\tau"] & j_!\cH^1_Z(\cO_X)(1)  \\
    {j_T}_! \cH^1_Z(\cO_X)(1) \arrow[rru, bend right=10]   &   &
\end{tikzcd}
\]
by applying the adjunction ${j_{T}}_!j_T^! \to \id$ to $j_!\tau$. Thus, this step can be concluded if we show that both $\gr^F_{k-d}\DR_X(j_!\tau)$ and 
\[
\gr^F_{k-d}\DR_X\left({j_{T}}_!i_*\Q^H_{Z\setminus T}[d]\right)\to\gr^F_{k-d}\DR_X\left({j}_! i_*\Q^H_{Z\setminus H}[d]\right) 
\] 
are quasi-isomorphisms. The following lemma takes care of the part of $\gr^F_{k-d}\DR_X(j_!\tau)$. 

\begin{lem}\label{lem:ext}
The canonical morphisms 
\[
F_{\ell+1} j_!\tau\colon F_{\ell+1}{j}_! i_*\Q^H_{Z\setminus H}[d] \xrightarrow{} F_{\ell} {j}_!\cH^1_Z(\cO_X)
\]
and 
\[
F_{\ell+1} j_*\tau\colon F_{\ell+1}{j}_* i_*\Q^H_{Z\setminus H}[d] \xrightarrow{} F_{\ell} {j}_*\cH^1_Z(\cO_X)
\]
are isomorphisms for $\ell\leq k$. In particular, $\gr^F_{k-d}\DR_X(j_!\tau)$ and $\gr^F_{k-d}\DR_X(j_*\tau)$ are both quasi-isomorphisms.
\end{lem}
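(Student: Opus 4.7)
The plan is to propagate the filtered isomorphism property of $\tau|_{X\setminus H}$---which comes from the $k$-rationality of $Z\setminus H$---through the open-immersion functors $j_*$ and $j_!$ by invoking Saito's description of the Hodge filtration on (co)localization along a smooth hypersurface.

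First I would exploit the hypothesis on $Z\setminus H$. Since $Z\setminus H$ is a codimension-one local complete intersection in the smooth variety $X\setminus H$, its $k$-rational singularities are equivalent, via Theorem~\ref{thm:3.1} with $r=1$, to the statement that $F_{k+1}\tau|_{X\setminus H}$ is an isomorphism. Because the Hodge filtration is strictly compatible with morphisms of mixed Hodge modules, the inclusions $F_\ell A\hookrightarrow F_{k+1} A$ and $F_\ell B\hookrightarrow F_{k+1}B$ (with $A=i_*\Q^H_Z[d]$ and $B=\cH^1_Z(\cO_X)(1)$) imply that $F_{\ell+1}\tau|_{X\setminus H}$ is an isomorphism for every $\ell\leq k$, not only for $\ell=k$.

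Next I would invoke Saito's explicit formula for the Hodge filtration on the (co)localization of a mixed Hodge module along the smooth hypersurface $H$. If $h$ is a local equation for $H$ and $V^{\bullet}$ denotes the Kashiwara--Malgrange $V$-filtration along $h$, then for any MHM on $X$ with underlying filtered $\cD_X$-module $(\cM,F)$,
\[
F_p\bigl(j_+ j^*\cM\bigr) \;=\; \sum_{i\geq 0}\partial_h^{i}\bigl(F_{p-i}V^0\cM\bigr),
\]
and there is a parallel formula for $j_!j^*$. These formulas are manifestly functorial, and since morphisms of MHM preserve the $V$-filtration strictly, a morphism that is a filtered iso on $F_{\leq\ell+1}$ after restriction to $X\setminus H$ becomes, after applying $j_*$ or $j_!$, a filtered iso on $F_{\leq\ell+1}$ over all of $X$. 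Applied to $\tau$, this produces the desired isomorphisms. One can alternatively deduce the $j_!$ case from the $j_*$ case by duality, using $j_!=\bD_X j_* \bD_X$ together with the self-duality $\bD_X\tau = \tau(d)$ inherited from~\eqref{eq:lcdual}.

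The quasi-isomorphism of the graded de Rham complexes is then automatic: since $n=d+1$, the complex $\gr^F_{k-d}\DR_X$ only involves the graded pieces $\gr^F_p$ for $p=k-d,\ldots,k+1$, and each of these is an isomorphism once the $F_p$-isomorphism is established. The main obstacle I expect is carefully checking that the $V$-filtered structure on $\tau$ is compatible on both sides so that the $F$-iso on $X\setminus H$ really does produce an $F$-iso on $X$ after $j_*$ or $j_!$; this amounts to a strictness statement for $V$-filtrations and may be cleaner to establish through the duality reduction indicated above than directly from the formula.
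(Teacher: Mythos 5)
Your strategy matches the paper's: invoke Theorem~\ref{thm:3.1} to get the filtered isomorphism over $X\setminus H$, use Saito's explicit formula for the Hodge filtration on $j_*$ and $j_!$ in terms of the $V$-filtration along $H$, and propagate the filtered isomorphism. However, the step you flag as ``the main obstacle'' is precisely the substance of the lemma and is not actually carried out. The paper resolves it via the bistrictness of the Hodge filtration and $V$-filtration applied to $\tau$: this yields the exact sequence
\[
0\to F_{\ell+1}V^\alpha\ker \tau \to F_{\ell+1}V^\alpha\, i_*\Q^H_Z[d] \to F_{\ell}V^\alpha\, \cH^1_Z(\cO_X) \to F_{\ell+1}V^\alpha\coker \tau \to 0
\]
for all $\alpha\in\Q$ and $\ell\in\Z$, and the outer terms vanish when $\ell\le k$ and $\alpha>0$ because $F_{\ell+1}\ker\tau$ and $F_{\ell+1}\coker\tau$ restrict to zero on $X\setminus H$ (as $F_{\ell+1}j^*\tau$ is an isomorphism) while $V^{\alpha}$ with $\alpha>0$ admits no nonzero subobject supported in $H$. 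Thus $F_{\ell+1}V^\alpha\tau$ is an isomorphism for $\alpha>0$ and $\ell\le k$, and the formulas~\eqref{eq:ext} then give the isomorphisms for both $j_!$ and $j_*$. Your observation that the $j_!$ case follows from the $j_*$ case by duality via $\bD_X(\tau)=\tau(d)$ is a legitimate shortcut (the paper handles both directly), but it does not bypass the core bistrictness argument, which is still needed for at least one of the two cases. Also check your indexing: the paper's formulas use $V^1$ rather than $V^0$ in its conventions.
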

\begin{proof}[Proof of the lemma]
Since $Z\setminus H$ has $k$-rational singularities, 
\[
F_{\ell+1}j^!\tau \colon F_{\ell+1}j^!i_*\Q^H_{Z}\to F_\ell j^!\cH^1_{Z}(\cO_{X})
\] 
are isomorphisms for $\ell \leq k$ by Theorem~\ref{thm:3.1}. We can assume that $H$ is defined by a regular function $t$ as the statement is local. For a mixed Hodge module $M$, by definition~\cite{Saito_MHP}*{3.2.2}:
\begin{equation}\label{eq:ext}
F_\ell (j_!M) = \sum_{i\geq 0} \partial^i_t \cdot F_{\ell-i} V^{1}(j_!M) \quad \text{and} \quad F_\ell (j_*M) = \sum_{i\geq 0} \partial^i_t \cdot \frac{1}{t}F_{\ell-i} V^{1}(j_*M)
\end{equation}
where $V^\bullet M$ is the $V$-filtration along $H$. Note also that when $\alpha>0$, we have 
\[
V^\alpha M=V^\alpha j_*M =V^\alpha j_!M.
\]
By the bistrictness of Hodge filtration and $V$-filtration~\cite{Saito_MHM}*{2.5} or~\cite{CD}*{Corollary 2.9}, the sequence
\[
 0 \to F_{\ell+1}V^\alpha \ker \tau \to F_{\ell+1}V^\alpha  i_*\Q^H_Z \to F_{\ell} V^\alpha  \cH^1_Z(\cO_X) \to F_{\ell+1}V^\alpha \coker \tau \to 0 
\]
is exact for any $\alpha\in \Q$ and $\ell \in \Z$. But when $\ell\leq k$ and $\alpha>0$, both $F_{\ell+1}V^\alpha \ker \tau$ and $F_{\ell+1}V^\alpha \coker \tau$ vanish because 
\[
F_{\ell+1}V^\alpha \ker\tau = ({j}_*j^*F_{\ell+1} \ker\tau) \cap V^\alpha \ker\tau 
\]
\text{and}
\[
F_{\ell+1} V^\alpha \coker \tau = ({j}_*j^*F_{\ell+1} \coker \tau) \cap V^\alpha \coker \tau
\] 
together with the fact that $F_{\ell+1}j^*\tau$ are isomorphisms. We deduce that 
\[
F_{\ell+1}V^\alpha(\tau) \colon F_{\ell+1}V^\alpha  i_*\Q^H_Z \to F_{\ell} V^\alpha  \cH^1_Z(\cO_X)
\]
is isomorphism for any $\ell\leq k$ and $\alpha>0$. This shows that $F_{\ell} j_!\tau$ and $F_{\ell+1}j_*\tau$ are isomorphisms by~\eqref{eq:ext} and we immediately see that 
\[
\gr^F_{k-d}\DR_X(j_!\tau) \quad \text{and} \quad \gr^F_{k-d}\DR_X(j_*\tau)
\] 
are both quasi-isomorphisms by~\eqref{eq:drconv}.
\end{proof}

It remains to prove that the canonical morphism
\begin{equation}\label{eq:acyc}
\gr^F_{k-d}\DR_X\left({j_{T}}_!i_*\Q^H_{Z\setminus T}[d]\right)\to\gr^F_{k-d}\DR_X\left({j}_! i_*\Q^H_{Z\setminus H}[d]\right) 
\end{equation}
is a quasi-isomorphism. Recall that $j'\colon X\setminus Z_\sing \to X$ is the open immersion and $i'\colon Z_\sing \to X$ is the closed immersion. Thanks to the distinguished triangle 
\[
{j_T}_!i_*\Q^H_{Z\setminus T} \to {j}_! i_*\Q^H_{Z\setminus H} \to {j}_!i'_*\Q^H_{Z_{\sing}} \xrightarrow{+1},
\]
obtained from applying $j_!j^!$ to 
\[
j'_!i_*\Q_{Z_\reg}^H \to  i_*\Q^H_Z \to \underbrace{i'_*i'^* i_*\Q_{Z}}_{=i'_*\Q_{Z_\sing}^H} \xrightarrow{+1},
\]
the assertion that~\eqref{eq:acyc} is an isomorphism is now equivalent to the acyclicity of $\gr^F_{k-d}\DR_X\left({j}_!i'_*\Q^H_{Z_{\sing}}\right)$. It is sufficient to show that $\underline\Omega^{d-k}_{Z_{\sing}\cap H}$ and $\underline\Omega^{d-k}_{Z_{\sing}}$ are both acyclic, due to the distinguished triangle
\[
\begin{aligned}
    \gr^F_{k-d}\DR_X\left({j}_!i'_*\Q^H_{Z_{\sing}}\right) \to & \\
\underbrace{\gr^F_{k-d}\DR_X\left(i'_*\Q^H_{Z_{\sing}}\right)}_{\cong\underline\Omega^{d-k}_{Z_{\sing}}[d-k]}& \to \underbrace{\gr^F_{k-d}\DR_X\left({i'_H}_*\Q^H_{Z_{\sing}\cap H}\right)}_{\cong\underline\Omega^{d-k}_{Z_{\sing}\cap H}[d-k]} \xrightarrow{+1},
\end{aligned}
\]
where $i'_H\colon Z_\sing\cap H\to X$ is the closed immersion. This is done by observing that $Z$ has $k$-Du Bois singularities and hence, $\dim Z_{\sing}\cap H  \leq \dim Z_{\sing}\leq d-2k-1 <d-k$ by Theorem~\ref{sing_locus_Du_Bois}.

\subsubsection{} 
The remaining is to prove that $\cH^0\bD_X\left(\gr^F_{-k}\DR_X(\beta)\right)$ is surjective. To this end, we make use of an auxiliary mixed Hodge module $M:=\cH^{-1}i_*i^*j_*j^*\Q_X^H[n]$. Note that $M$ is the same as $i_*\Q_{Z\setminus H}^H[d]$ over the non-characteristic locus $X'$, the complement of $Z_\sing$ and $(Z\cap H)_\sing$ in $X$, with respect to $j_*j^*\Q_X^H[n]$. Due to the adjunction $\id\to j_*j^*$, we obtain a factorization
\[
\beta\colon i_*\Q^H_Z[d] \xrightarrow{\gamma} M \xrightarrow{\delta} \underbrace{i_*\Q^H_{Z\setminus H}[d]}_{= j_*j^*M}
\] 
This reduces the surjectivity of $\cH^0\bD_X\left(\gr^F_{-k}\DR_X(\beta)\right)$ to the surjectivity of 
\[
\cH^0\bD_X\left(\gr^F_{-k}\DR_X(\gamma)\right) \quad \text{and} \quad \cH^0\bD_X\left(\gr^F_{-k}\DR_X(\delta)\right).
\]

\subsubsection{} 
We prove that $\cH^0\bD_X\left(\gr^F_{-k}\DR_X(\gamma)\right)$ is surjective. Let $i_H\colon H\to X$ be the closed immersion. Since $H$ is smooth, there is a short exact sequence of mixed Hodge modules
\begin{equation}\label{eq:xses}
0\to \Q^{H}_X[n] \to j_*j^*\Q^H_X[n] \to {i_{H}}_* \Q^H_H[n-1](-1) \to 0.
\end{equation}
Applying $i_*i^{*}$ to~\eqref{eq:xses} and then taking $(-1)$-th cohomology we get
\begin{equation}\label{eq:M}
0 \to i_*\Q_Z^H[d] \xrightarrow{\gamma}  M \to {i_D}_* \Q^H_D[d-1](-1) \xrightarrow{} 0,
\end{equation}
where $D:=Z\cap H$ and $i_D:D \to X$ is the closed embedding. Therefore, the dual of $\gamma$ is a surjection of mixed Hodge modules:
\[
\bD_X(\gamma)\colon \bD_X(M) \to \bD_X(i_*\Q_Z^H[d]).
\]
Because $\cH^0\bD_X(\gr^F_{-k}\DR_X(\gamma))=\cH^0\gr^F_{k}\DR_X(\bD_X(\gamma))$ by~\eqref{eq:dualprop}, and the top cohomological degree of the de Rham complex of a $\cD$-module is $0$; see~\eqref{eq:drconv}, we have completed this step.

\subsubsection{} 
We will prove in the remaining steps that 
\[
\gr^F_{-k}\DR_X(\delta) \colon \gr^F_{-k}\DR_{X}(M) \to \gr^F_{-k}\DR_X\left(i_*\Q^H_{Z\setminus H}[d]\right) 
\]
admits a left inverse in $\rm D^b_{\rm coh}(X)$. Here, by a \emph{left inverse} of a morphism $f\colon A\to B$, we mean a morphism $g\colon B\to C$ such that $g\circ f\colon A\to B\to C$ is an isomorphism. Clearly, the existence of a left inverse of $\gr^F_{-k}\DR_X(\delta)$ implies that $\cH^0\bD_X(\gr^F_{-k}\DR_X(\delta))$ is surjective by duality.

The observation is that
\[
\gr^F_{-k}\DR_X(j_*\tau)\colon \gr^F_{-k}\DR_X\left(i_*\Q^H_{Z\setminus H}[d]\right) \xrightarrow{\cong} \gr^F_{-k-1}\DR_X\left(j_*\cH^1_Z(\cO_X)\right). 
\]
is a quasi-isomorphism because its dual
\[
\bD_X(\gr^F_{-k}\DR_X(j_*\tau)) \cong \gr^F_{k}\DR_X(\bD_X(j_*\tau)) \cong \gr^F_{k-d}\DR_X\left(j_!\tau \right)
\]
is by Lemma~\ref{lem:ext} and $\bD_X(\tau)=\tau(d)$. Therefore, to construct a left inverse of $\gr^F_{-k}\DR_X(\delta)$, it suffices to construct a left inverse of the morphism:
\begin{equation}\label{eq_linver}
  \gr^F_{-k}\DR_X(j_*\tau\circ\delta) \colon  \gr^F_{-k}\DR_X(M)\to \gr^F_{-k-1}\DR_X\left(j_*\cH^1_Z(\cO_X)\right).
\end{equation}

\subsubsection{} 
We claim that there is a morphism 
\begin{equation}\label{eq:st}
\gr^F_{-k-1}\DR_X\left(j_*\cH^1_Z(\cO_X)\right) \to \gr^F_{-k-1}\DR_X\left(j_!\cH^1_Z(\cO_X)\right)\otimes_{\cO_X} \cO_X(H),
\end{equation}
and it will be shown in the next step that it is a left inverse of~\eqref{eq_linver}. 

Take a log resolution $f:Y\to X$ of the pair $(X,Z+H)$ such that $f$ is isomorphic over the simple normal crossing locus $X'$ of $(X,Z+H)$; such log resolution always exist~\cite{SMMP}*{Theorem 10.45}. In our case, the simple normal crossing locus $X'$ is the complement of $Z_\sing$ and $(Z\cap H)_\sing$ in $X$. Set $\widetilde Z:={(f^*Z)}_{\rm red}$, $\widetilde H:={(f^*H)}_{\rm red}$ and $G:=(\widetilde Z+\widetilde H)_{\rm red}$. Taking $\gr^F_{-k-1}\DR_X$ of the following short exact sequence of mixed Hodge modules
\[
0\to {j_*j^*\cO_X} \to {j_*j^*\cO_X(*Z)} \to j_*j^*\cH^1_Z(\cO_X) \to 0
\]
gives a distinguished triangle by~\eqref{eq:logres}:
\begin{equation}\label{eq:ses*}
\begin{aligned}
     \Omega^{k+1}_X(\log H) \to & \bR f_*\Omega^{k+1}_Y(\log G)  \\
    &  \to\gr^F_{-k-1}\DR_X\left(j_*\cH^1_Z(\cO_X)\right)[k-d] \xrightarrow{+1}.
\end{aligned}
\end{equation}
 
We also make use of another short exact sequence of mixed Hodge modules:
\begin{equation}\label{eq:!*}
0\to {j_!j^!\cO_X} \to {j_!j^!\cO_X(*Z)} \to j_!j^!\cH^1_Z(\cO_X) \to 0.
\end{equation}
Let $U$ be the complement of $Z+H$ in $X$. Let $\tilde j\colon Y\setminus \widetilde H\to Y$, $\tilde h \colon U\to Y\setminus \widetilde H$ and $h\colon U\to X\setminus H$ be the open immersions. Their relations are summarized in the following Cartesian diagram.
\[
\begin{tikzcd}
U \arrow{r}{\tilde h}\arrow[d, equal] & Y\setminus \widetilde H  \arrow{r}{\tilde j} \arrow{d} &   Y \arrow{d}{f} \\
U \arrow{r}{h} & X\setminus H \arrow{r}{j} & X
\end{tikzcd}
\]
Because $f$ is proper, it follows from 
\[
{j_!j^!\cO_X(*Z)}=j_!h_*\cO_U=f_+\tilde{j}_!\tilde{h}_*\cO_U=f_+\tilde{j}_!\tilde{j}^!\cO_Y(* \tilde Z)=f_+\tilde{j}_!\tilde{h}_*\cO_U
\] 
and Lemma~\ref{lem:logcomapre} below that we can compute $\gr^F_{-k-1}\DR_X\left({j_!j^!\cO_X(*Z)}\right)$ by
\[
\bR f_*\left(\Omega^{k+1}_Y\left(\log G\right)(-\widetilde H)\right)[n-k-1] \xrightarrow{\cong} \gr^F_{-k-1}\DR_X\left(f_+\tilde{j}_!\tilde{j}^!\cO_Y\left(* \widetilde Z\right)\right)
\]
because of~\eqref{eq:direct}. Then applying $\gr^F_{-k-1}\DR_X$ to~\eqref{eq:!*} gives another distinguished triangle:
\begin{equation}\label{eq:ses!}
\begin{aligned}
     \Omega^{k+1}_X(\log H)(-H) \to & \bR f_*\left(\Omega^{k+1}_Y\left(\log G\right)(-\widetilde H)\right) \\
    & \to \gr^F_{-k-1}\DR_X\left(j_!\cH^1_Z(\cO_X)\right)[k-d] \xrightarrow{+1}. 
\end{aligned}
\end{equation}

Since $f^*H-\widetilde H$ is effective, there is a canonical map
\[
\theta \colon \Omega^{k+1}_Y\left(\log G \right) \to \Omega^{k+1}_Y\left(\log G\right)\left(f^*H-\widetilde H\right).
\] 
Combined with the projection formula, we then have the following commutative diagram. 
\[
\begin{tikzcd}
\Omega^{k+1}_X(\log H) \arrow{r}\arrow[d, "="] & \bR f_*\Omega^{k+1}_Y(\log G)\arrow{d}{\bR f_*\theta} \\
\Omega^{k+1}_X(\log H)(-H) \otimes\cO_X(H) \arrow{r} & \bR f_*\left(\Omega^{k+1}_Y\left(\log G \right)(-\widetilde H)\right) \otimes \cO_X(H)
\end{tikzcd}
\]
Comparing~\eqref{eq:ses*} with~\eqref{eq:ses!}, the above diagram indicates that there is a (non-canonical) morphism by the axiom $\rm TR3$ of triangulated categories
\begin{equation}\label{eq_4.1}
    \gr^F_{-k-1}\DR_X\left(j_*\cH^1_Z(\cO_X)\right) \to \gr^F_{-k-1}\DR_X\left(j_!\cH^1_Z(\cO_X)\right)\otimes_{\cO_X} \cO_X(H)
\end{equation}
in $\rm D^b_{\rm coh}(X)$, which is isomorphic over the simple normal crossing locus $X'$. This is exactly the morphism we are after.

\subsubsection{}\label{subsubsec:7}
We use the idea in~\cite{Sch}*{Theorem 5.1} to prove that~\eqref{eq_4.1} is a left inverse of~\eqref{eq_linver}; i.e. the composition of the morphisms
\[
\begin{aligned}
     \phi\colon  & \gr^F_{-k}\DR_{X}(M)  \xrightarrow{}  \gr^F_{-k}\DR_X\left(i_*\Q^H_{Z\setminus H}[d]\right) \\
\xrightarrow{\cong} & \gr^F_{-k-1}\DR_X(j_*\cH^1_Z(\cO_X)) \to \gr^F_{-k-1}\DR_X\left(j_!\cH^1_Z(\cO_X)\right)\otimes_{\cO_X} \cO_X(H)
\end{aligned}
\]
is an isomorphism in $\rm D^b_{\rm coh}(X)$.

We first argue that the source and target of $\phi$ are supported in cohomological degree $k-d$. Taking $\gr^F_{-k}\DR_X$ of~\eqref{eq:M}:
\[
0 \to i_*\Q_Z^H[d] \xrightarrow{\gamma}  M \to {i_D}_* \Q^H_D[d-1](-1) \xrightarrow{} 0
\] 
gives a distinguished triangle:
\begin{equation}\label{eq:logses}
 \Omega_Z^k \to \gr^F_{-k}\DR_X(M)[k-d] \to \Omega_{D}^{k-1} \xrightarrow{+1}
\end{equation}
by~\eqref{eq:defdb} because both $Z$ and $D=Z\cap H$ have $k$-Du Bois singularities. 
Therefore, $\gr^F_{-k}\DR_X(M)\cong\cF[d-k]$ for a coherent $\cO_Z$-module $\cF$. Moreover, if we put $W:=Z\cap X'=Z_{\reg}\setminus D_{\sing}$, then
\[
\cF|_W=\Omega^k_W(\log D_W)
\] 
where $D_W:=D\cap W$, as $M\vert_{X'}=i_+\cO_Z(*D)\vert_{X'}$.

Next, we observe that the morphism
\begin{equation}\label{eq_ax}
    \gr^F_{-k}\DR_X(j_!\tau) \colon \gr^F_{-k}\DR_X\left(j_!i_*\Q^H_Z[d]\right)  \to \gr^F_{-k-1}\DR_X\left(j_!\cH^1_Z(\cO_X)\right)
\end{equation}
is a quasi-isomorphism as its dual 
$\bD_X(\gr^F_{-k}\DR_X(j_!\tau)\cong \gr^F_{k-d}\DR_X(j_*\tau)$
is by Lemma~\ref{lem:ext}, recalling that $\bD_X(\tau)=\tau(d)$. Then by rotating $\gr^F_{-k}\DR_X$ of the distinguished triangle
\[
{i_D}_*\Q^H_D[d-1] \to j_!i_*\Q^H_Z[d] \to i_*\Q^H_Z[d] \xrightarrow{+1}
\]
we get another distinguished triangle
\begin{equation}\label{eq:!dtrig}
\gr^F_{-k}\DR_X\left(j_!i_*\Q^H_Z[d]\right)[k-d] \to \Omega^k_Z \to \Omega^k_D \xrightarrow{+1}
\end{equation}
again thanks to the fact that both $D$ and $Z$ have $k$-Du Bois singularities.
This implies, as $\Omega^k_Z \twoheadrightarrow \Omega_D^k$, that 
\[
\gr^F_{-k-1}\DR_X\left(j_!\cH^1_Z(\cO_X)\right)\cong \gr^F_{-k}\DR_X\left(j_!i_*\Q^H_Z[d]\right)\cong \cE[d-k]
\] 
where the $\cO_Z$-module $\cE$ is the kernel of $\Omega^k_Z \twoheadrightarrow \Omega_D^k$. Moreover, we also see that $\cE|_W = \Omega^k_W(\log D_W)(-D_W)$. 

\subsubsection{}
We have reduced the proof of $\phi$ is quasi-isomorphism to that 
\[
\cH^{k-d}(\phi) \colon \cF\to \cE\otimes_{\cO_X}\cO_X(H)=\cE(D)
\] 
is an isomorphism as $\cO_Z$-modules. Note that by the discussion in~\ref{subsubsec:7}, $\cH^{k-d}(\phi)\vert_W$ is isomorphic because $\delta$ and~\eqref{eq_4.1} are isomorphic over $X'$; indeed, it even identifies $\cF\vert _W$ and $\cE(D)\vert _W$ because both are equal to $\Omega^k_W(\log D_W)$. Hence, by the adjunction $\id \to {j_W}_*j^*_W$ for $\cO_Z$-modules, we get the following commutative diagram.
\[
\begin{tikzcd}
\cF \arrow{r}{\cH^{k-d}(\phi)} \arrow{d}	&	\cE(D) \arrow{d}  \\
 {j_W}_*(\cF\vert_W) \arrow[r,equal] & {j_W}_*(\cE(D)\vert_W) 
\end{tikzcd}
\]
Thus, the proof will be concluded if we can show that $\cF={j_W}_*(\cF\vert_W)$ and that $\cE(D)$ is $\cO_Z$-torsion free. 

Clearly, the $\cO_Z$-module $\cE(D)$ is torsion free as $\cE$ is a subsheaf of the torsion free $\cO_Z$-module $\Omega^k_Z$. Recall that for a local complete intersection $Y$ the $\cO_Y$-module $\Omega_Y^p$ is even reflexive. 

Lastly, because the codimension of $D\setminus W$ in $D$ is at least $2$ by Theorem~\ref{sing_locus_Du_Bois}, we have the following commutative diagram, 
\[
\begin{tikzcd}[column sep=small]
0\arrow{r} & \Omega_Z^k \arrow{r}\arrow[d,"="] &  \cF \arrow{r}\arrow{d} & \Omega_{D}^{k-1} \arrow{r}\arrow[d,"="] &  0 \\
0\arrow{r} & {j_W}_*(\Omega_Z^k\vert_W) \arrow{r} &  {j_W}_*(\cF\vert_W) \arrow{r} & {j_W}_*(\Omega_{D}^{k-1}\vert_W) & 
\end{tikzcd}
\]
obtained by applying the adjunction $\id \to {j_W}_*j_W^*$ for $\cO_Z$-modules to the short exact sequence~\eqref{eq:logses}. The two outermost vertical maps are identity because $\Omega^k_Z$ and $\Omega^{k-1}_D$ are reflexive as $\cO_Z$-module and $\cO_D$-module, respectively. We have concluded the proof because the $5$-lemma implies that $\cF={j_W}_*(\cF\vert_W)$.  \hfill $\square$

\subsubsection{Log comparison}
The following can be essentially proved as in~\cite{Saito_MHM}*{3.11} via compatible $V$-filtrations on $\cD$-modules of normal crossing type; see also~\cite{Wei}. We sketch a proof of a simpler (but sufficient for application) statement for the reader's convenience.

\begin{lem} \label{lem:logcomapre}
    Let $X$ be a smooth complex algebraic variety of dimension $n$. Let $D$ and $E$ be two reduced effective divisors on $X$ such that the divisor $D+E$ has simple normal crossing support. Denote by $j\colon X\setminus E \to X$ be the open immersion. Then, for every $k\in \Z$, there is a natural quasi-isomorphism:
    \[
     \Omega_X^{k}(\log D+E)(-E)[n-k] \xrightarrow{\cong} \gr^F_{-k}\DR_X (j_!j^!\cO_X(*D)). 
    \]
\end{lem}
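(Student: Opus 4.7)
The plan is to mimic Saito's proof of the log comparison theorem~\cite{Saito_MHM}*{3.11}, now applied to the mixed Hodge module $j_!j^!\cO_X(*D)$, which is of normal crossing type with meromorphic extension along $D$ and extension by zero along $E$. The statement is local, so I would work in local coordinates on $X$ adapted to the SNC divisor $D+E$ and exploit the explicit compatible $V$-filtrations along the smooth components.

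I would induct on the number $s$ of irreducible components of $E$. The base case $s=0$, i.e.\ $E=0$, is precisely Saito's log comparison $\gr^F_{-k}\DR_X(\cO_X(*D))\cong \Omega_X^k(\log D)[n-k]$, since the twist by $(-E)$ is trivial. For the inductive step, choose a smooth irreducible component $F$ of $E$, write $E=F+E'$, and factor $j$ as $b\circ a$ with $a\colon X\setminus E\hookrightarrow V:=X\setminus F$ and $b\colon V\hookrightarrow X$ both open immersions. Since $b^!\cO_X(*D)\cong \cO_V(*D|_V)$, the inductive hypothesis on $V$ with the SNC pair $(D|_V, E'|_V)$ gives
\[
\gr^F_{-k}\DR_V\bigl(a_!a^!\cO_V(*D|_V)\bigr)\cong \Omega_V^k\bigl(\log((D+E')|_V)\bigr)(-E'|_V)[n-k].
\]
It then remains to apply $b_!$ and compute $\gr^F_{-k}\DR_X$; this last step should contribute the extra tensor factor $\cO_X(-F)$, so as to upgrade $(-E')$ to $(-E)$ and $\log(D+E')$ to $\log(D+E)$.

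For the last step, with $F$ cut out locally by $t=0$, Saito's formula~\eqref{eq:ext} for the Hodge filtration on $b_!$ reads $F_p(b_!N)=\sum_{i\geq 0}\partial_t^i\cdot F_{p-i}V^1(b_!N)$, while for $b_*$ it reads $F_p(b_*N)=\sum_{i\geq 0}\partial_t^i\cdot\tfrac{1}{t}F_{p-i}V^1(b_*N)$; the absence of the $\tfrac{1}{t}$ in the $b_!$-formula is the only formal difference between the two computations. By the bistrictness of the Hodge and $V$-filtrations in the normal crossing situation~\cite{Saito_MHM}*{2.5},~\cite{CD}*{Corollary~2.9}, the local calculation of~\cite{Saito_MHM}*{3.11} for $b_*$, which yields $\Omega_X^k(\log(D+E))[n-k]$, adapts essentially verbatim to $b_!$, with the missing $\tfrac{1}{t}$ producing exactly the tensor factor $\cO_X(-F)$ in the associated graded log de Rham complex. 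The naturality of the resulting quasi-isomorphism is inherited from the functoriality of the six-operation formalism.

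The main obstacle is the careful bookkeeping of the compatible $V$-filtrations through the induction: the $\cD$-module $a_!a^!\cO_V(*D|_V)$ is of normal crossing type but already carries an extension-by-zero along the $s-1$ components of $E'$, and one must verify that the $V$-filtration along the new component $F$ remains bistrict with the Hodge filtration and interacts correctly term by term with the previous extensions, so that the $b_!$-formula can be applied cleanly on each piece of the log de Rham complex. As the statement of the lemma notes, this is essentially Saito's argument in~\cite{Saito_MHM}*{3.11}, with a detailed treatment of compatible $V$-filtrations of normal crossing type available in~\cite{Wei}.
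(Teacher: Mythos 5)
Your proposal is correct in outline but takes a genuinely different route from the paper. Both arguments induct on the number of components of $E$, and both ultimately isolate a component (your $F$, the paper's $H$) and reduce to the case of one fewer component. But the mechanisms differ. The paper stays on $X$: it writes $\cN := j'_!j'^!\cO_X(*D)$ with $j'\colon X\setminus E'\to X$, uses the localization short exact sequence $0\to i_{H,+}\cN_H\to j_!j^!\cO_X(*D)\to\cN\to 0$ (with $\cN_H=\cH^{-1}i_H^+\cN$ computed by base change), builds an explicit inclusion of $\Omega_X^k(\log G)(-E)$ into the top nonzero term $F_0 j_!j^!\cO_X(*D)\cong\cO_X(D)$ after establishing the lowest Hodge piece by strictness, and then compares the resulting distinguished triangle of graded de Rham complexes with the short exact sequence $0\to\Omega_X^k(\log G)(-E)\to\Omega_X^k(\log G')(-E')\to\Omega_H^k(\log G'_H)(-E'_H)\to 0$, concluding by the five lemma with the inductive hypothesis applied both to $\cN$ on $X$ and to $\cN_H$ on $H$. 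You instead factor $j$ through the open set $V=X\setminus F$, apply the induction once on $V$, and then propose to compute $\gr^F_{-k}\DR_X(b_!N)$ directly from Saito's formula $F_p(b_!N)=\sum_i\partial_t^i F_{p-i}V^1(b_!N)$, arguing the missing $\tfrac{1}{t}$ (relative to $b_*$) produces the extra $\cO_X(-F)$. This is exactly the explicit $V$-filtration computation that the paper's 5-lemma maneuver is designed to bypass. Your approach is more direct, and should go through in the normal crossing setting, but the step you flag as the main obstacle is genuinely nontrivial: you need to know enough of the $V$-filtration of $b_!N$ along $F$ term by term in the graded de Rham complex, in the presence of the mixed !-extensions along $E'$, whereas the paper only needs the single statement $F_0 j_!j^!\cO_X(*D)\cong\cO_X(D)$ plus the two inductive inputs, because the five lemma does the rest. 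Also be careful with the phrasing of the $b_*$ comparison: in the middle of your argument the claimed output for $b_*$ should carry the $(-E')$ twist coming from your inductive hypothesis on $V$ (it does appear correctly in the paragraph above it), so that the $b_!$ answer is $(-E')\otimes(-F)=(-E)$ as required.
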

\begin{proof}
    Deleting the common irreducible components from $D$, we can assume that $D$ and $E$ have no common components. Put $G:=D+E$.

    We argue inductively on the number of the irreducible components of $E$. When $E$ has $0$ irreducible components, i.e. $E$ is empty, the assertion was proved in~\cite{deligne06} and~\cite{Saito_MHM}*{3.11}. 

    Let $H$ be an irreducible component of $E$ and $E':=E-H$. Let $i_H\colon H\to X$ be the closed immersion and $j'\colon X\setminus E'\to X$ be the open immersion. Let $\mathcal N:=j'_!j'^! \cO_X(*D)$. By the base change formula, 
    \[
    (\cN_H,F):=(\cH^{-1}i^+_H\cN,F)={j_H'}_!{j_H'}^!\cO_H(*D_H)
    \] 
    as mixed Hodge modules and $\cH^0i^+_H\cN$ vanishes, where $j_H'\colon H\setminus E'\to H$ and $D_H:=D\cap H$. We then have a short exact sequence of mixed Hodge modules 
    \begin{equation} \label{eq:ind}
        0\to {i_H}_+\cN_H \to \underbrace{j_!j^!\cO_X(*D)}_{=\cN(!H)} \to \cN \to 0.
    \end{equation}
    An inductive argument, together with the exact sequence obtained by the strictness of the Hodge filtration 
    \begin{equation}
        0\to \underbrace{F_0{i_H}_+\cN_H}_{=\omega_{H/X}\otimes F_{-1}\cN_H} \to F_0j_!j^!\cO_X(*D) \to F_0\cN \to 0,
    \end{equation}
    shows that the lowest non-zero piece of the Hodge filtration is $F_0j_!j^!\cO_X(*D)$ and 
    \[
    F_0j_!j^!\cO_X(*D)\cong F_0\cO_X(*D)=\cO_X(D).
    \] 
    Then we see that there is an inclusion 
    \[
    \Omega^k_X(\log G)(-E) \hookrightarrow \Omega^k_X(D)\cong \Omega^k_X\otimes F_0j_!j^!\cO_X(*D). 
    \]
    This can be extended to an inclusion of complexes
    \begin{equation}\label{eq:login}
         \Omega^k_X(\log G)(-E)[n-k] \hookrightarrow    \gr^F_{-k}\DR_X(j_!j^!\cO_X(*D))
    \end{equation}
    as the right-hand side is:
    \[
        0\to \underbrace{\Omega_X^k\otimes F_0j_!j^!\cO_X(*D)}_{\mathrm{degree}\, k-n} \to \cdots \to \underbrace{\omega_X\otimes \gr^F_{n-k}j_!j^!\cO_X(*D)}_{\mathrm{degree }\, 0} \to 0.
    \]

    Consider the distinguished triangle obtained from rotating $\gr^F_{-k}\DR_X$ of~\eqref{eq:ind}:
    \begin{equation}\label{eq:logdist}
    \gr^F_{-k} \DR_X(j_!j^!\cO_X(*D)) \to \gr^F_{-k} \DR_X(\cN) \xrightarrow{\varepsilon} \underbrace{\gr^F_{-k} \DR_H(\cN_H)[1]}_{\cong \gr^F_{-k} \DR_X({i_H}_+\cN_H)[1]} \xrightarrow{+1},
    \end{equation}
    where $\varepsilon$ is induced by pull-back of K\"ahler differentials. Then~\eqref{eq:login} gives a morphism from the  distinguished triangle 
    \[
    \begin{aligned}
        \Omega_X^{k}(\log G)(-E)[n-k] \to  & \\
        \Omega_X^{k}(\log G')(-E')&[n-k]  \xrightarrow{\varepsilon} 
     \Omega_H^k(\log G'_H)(-E'_H)[n-k] \xrightarrow{+1}
    \end{aligned}
    \]
    to~\eqref{eq:logdist}, where $G':=D+E'$, $E'_H:=E'\cap H$ and $G'_H:=G'\cap H$. Now because of the induction hypothesis on $\cN$ and $\cN_H$, an application of the $5$-lemma gives
    \[
     \Omega_X^{k}(\log G)(-E)[n-k] \to \gr^F_{-k} \DR_X(j_!j^!\cO_X(*D))
    \]
    is an isomorphism. 
\end{proof}

\begin{rmk}
    Indeed, there is a natural filtered quasi-isomorphism  
    \[
    \Omega^{n+\bullet}_X(\log D+E)(-E) \to (\DR_X(j_!j^!\cO_X(*D)),F).
    \]
    To prove this, it suffices to upgrade~\eqref{eq:login} into
    \[
         \sigma_{\geq k-n}\Omega^{\bullet+n}_X(\log G)(-E) \to    F_{-k}\DR_X(j_!j^!\cO_X(*D))
    \] 
    an inclusion of a subcomplex, where $\sigma_{\geq p}$ is the truncation at degree $p$. This can be checked using the description $j_!j^!\cO_X(*D)=\cD_X \otimes _{\cD_X(\log G)} \cO_X(D)$, from which we can see the $\cD$-module structure on $j_!j^!\cO_X(*D)$ clearly. 
\end{rmk}

We conclude the paper by

\subsection{Proof of Theorem~\ref{thm:inratbd}}
    Since the statement is local on $Z$, we can assume that there is a closed immersion  $i\colon Z\to X$ into an irreducible and smooth complex algebraic variety $X$ such that $Z$ has pure codimension $r$ in $X$. Then there is a hypersurface $H$ in $X$ containing no irreducible component of $Z$ such that $D=Z\cap H$. By taking $c=k+r$ in Theorem~\ref{thm:inme} and applying Theorem~\ref{thm:medbrat}, we conclude the proof. \hfill $\square$

\section*{References}
\begin{biblist}

\bib{BMS}{article}{
   author={Budur, N.},
   author={Musta\c{t}\u{a}, M.},
   author={Saito, M.},
   title={Bernstein-Sato polynomials of arbitrary varieties},
   journal={Compos. Math.},
   volume={142},
   date={2006},
   number={3},
   pages={779--797},
}

\bib{CD}{article}{
title={On V-filtration, Hodge filtration and Fourier transform},
  author={Chen, Q.},
  author={Dirks, B.},
  journal={Selecta Mathematica},
  volume={29},
  number={4},
  pages={50},
  year={2023},
  publisher={Springer}}

\bib{CDM}{article}{
      title={The minimal exponent and $k$-rationality for local complete intersections}, 
      author={Chen, Q.},
    author={Dirks, B.},
    author={Musta\c{t}\u{a}, M.},
      year={2023},
      journal={preprint arXiv:.01898}
}

\bib{CDMO}{article}{
author={Chen, Q.},
author={Dirks, B.},
author={Musta\c{t}\u{a}, M.},
author={Olano, S.},
title={$V$-filtrations and minimal exponents for locally complete intersection singularities},
journal={preprint arXiv:2208.03277},
date={2022},
}

\bib{deligne06}{book}{
  title={{\'E}quations diff{\'e}rentielles {\`a} points singuliers r{\'e}guliers},
  author={Deligne, P.},
  volume={163},
  year={2006},
  publisher={Springer}
}

\bib{DuBois}{article}{
   author={Du Bois, P.},
   title={Complexe de de Rham filtr\'{e} d'une vari\'{e}t\'{e} singuli\`ere},
   language={French},
   journal={Bull. Soc. Math. France},
   volume={109},
   date={1981},
   pages={41--81},
}

\bib{Brad}{article}{
      title={Some applications of microlocalization for local complete intersection subvarieties}, 
      author={Dirks, B.},
      year={2023},
      eprint={2310.15277}
}

\bib{DM22}{article}{
  title={Minimal exponents of hyperplane sections: a conjecture of Teissier},
  author={Dirks, B.}, 
  author={Musta{\c{t}}{\u{a}}, M.},
  journal={Journal of the European Mathematical Society},
  year={2022}
}

\bib{FL1}{article}{
author={Friedman, R.},
author={Laza, R.},
title={Deformations of singular Fano and Calabi-Yau varieties},
journal={preprint arXiv:2203.04823},
date={2022},
}

\bib{FL2}{article}{
author={Friedman, R.},
author={Laza, R.},
title={Higher Du Bois and higher rational singularities},
note={With an appendix by M.~Saito},
journal={to appear in Duke Math J.},
date={2022},
}

\bib{FL3}{article}{
author={Friedman, R.},
author={Laza, R.},
title={The higher Du Bois and higher rational properties for isolated singularities},
journal={to appear in J. Algebraic Geom.},
date={2022},
}

\bib{HTT}{book}{
  title={D-modules, perverse sheaves, and representation theory},
  author={Hotta, R.}, 
  author={Tanisaki, K.},
  author={Toshiyuki, T.},
  volume={236},
  year={2007},
  publisher={Springer Science \& Business Media},
}

\bib{Kollar}{article}{
   author={Koll\'ar, J.},
   title={Singularities of pairs},
   conference={
      title={Algebraic geometry---Santa Cruz 1995},
   },
   book={
      series={Proc. Sympos. Pure Math.},
      volume={62},
      publisher={Amer. Math. Soc., Providence, RI},
   },
   date={1997},
   pages={221--287},
}

\bib{SMMP}{book}{
  title={Singularities of the minimal model program},
  author={Koll{\'a}r, J.},
  volume={200},
  year={2013},
  publisher={Cambridge University Press}
}

\bib{Kashiwara}{article}{
author={Kashiwara, M.},
title={Vanishing cycle sheaves and holonomic systems of differential
equations},
conference={
 title={Algebraic geometry},
 address={Tokyo/Kyoto},
date={1982},
},
book={
 series={Lecture Notes in Math.},
 volume={1016},
  publisher={Springer, Berlin},
 },
date={1983},
pages={134--142},
}

\bib{Lichtin}{article}{
   author={Lichtin, B.},
   title={Poles of $|f(z, w)|^{2s}$ and roots of the $b$-function},
   journal={Ark. Mat.},
   volume={27},
   date={1989},
   number={2},
   pages={283--304},
}

\bib{Saito_et_al}{article}{
author={Jung, S.-J.},
author={Kim, I.-K.},
author={Saito, M.},
author={Yoon, Y.},
title={Higher Du Bois singularities of hypersurfaces},
journal={Proceedings of the London Mathematical Society},
volume={125},
number={3},
pages={543--567},
year={2022},
publisher={Wiley Online Library}
}

\bib{KS}{article}{
  title={Inversion of adjunction for rational and Du Bois pairs},
  author={Kov{\'a}cs, S.}, 
  author={Schwede, K.},
  journal={Algebra \& Number Theory},
  volume={10},
  number={5},
  pages={969--1000},
  year={2016},
  publisher={Mathematical Sciences Publishers}
}

\bib{Loe84}{article}{
  author= {Loeser, F.},
     title= {Exposant d'{A}rnold et sections planes},
 booktitle= {C. R. Acad. Sci. Paris Ser. I Math.},
    volume = {298},
    pages = {485--488},
 publisher = {Soc. Math. France, Paris},
      date = {1984},
      }

\bib{Malgrange}{article}{
  author= {Malgrange, B.},
     title= {Polynomes de {B}ernstein-{S}ato et cohomologie \'evanescente},
 booktitle= {Analysis and topology on singular spaces, {II}, {III}
              ({L}uminy, 1981)},
    series = {Ast\'erisque},
    volume = {101},
    pages = {243--267},
 publisher = {Soc. Math. France, Paris},
      date = {1983},
      }

\bib{MOPW}{article}{
     author={Musta\c t\u a, M.},
     author={Olano, S.},
     author={Popa, M.},
     author={Witaszek, J.},
     title={The Du Bois complex of a hypersurface and the minimal exponent},
     journal={preprint arXiv:2105.01245, to appear in Duke Math. J.},
     date={2021},
     }

\bib{hi}{book}{
  title={Hodge ideals},
  author={Musta{\c{t}}\u{a}, M.}, 
  author={Popa, M.},
   JOURNAL = {Mem. Amer. Math. Soc.},
    VOLUME = {262},
      YEAR = {2019},
    NUMBER = {1268},
     PAGES = {v+80},
      ISSN = {0065-9266,1947-6221},
      ISBN = {978-1-4704-3781-7; 978-1-4704-5509-5},
       DOI = {10.1090/memo/1268},
       URL = {https://doi.org/10.1090/memo/1268},
}

\bib{hiq}{article}{
  title={Hodge ideals for $\Q$-divisors: birational approach},
  author={Musta{\c{t}}\u{a}, M.}, 
  author={Popa, M.},
  JOURNAL = {J. \'{E}c. polytech. Math.},
    VOLUME = {6},
      YEAR = {2019},
     PAGES = {283--328},
      ISSN = {2429-7100,2270-518X},
       DOI = {10.5802/jep.94},
       URL = {https://doi.org/10.5802/jep.94},
}

\bib{MP1}{article}{
   author={Musta\c{t}\u{a}, M.},
   author={Popa, M.},
   title={Hodge filtration on local cohomology, Du Bois complex and local
   cohomological dimension},
   journal={Forum Math. Pi},
   volume={10},
   date={2022},
   pages={Paper No. e22, 58},
}

\bib{MP2}{article}{
author={Musta\c{t}\u{a}, M.},
author={Popa, M.},
title={On $k$-rational and $k$-Du Bois local complete intersections},
journal={preprint arXiv:2207.08743},
date={2022},
}

\bib{MSS}{article}{
  title={Local cohomology of Du Bois singularities and applications to families},
  author={Ma, L.},
  author={Schwede, K},
  author={Shimomoto, K},
  journal={Compositio Mathematica},
  volume={153},
  number={10},
  pages={2147--2170},
  year={2017},
  publisher={London Mathematical Society}
}

\bib{park}{article}{
      title={Du Bois complex and extension of forms beyond rational singularities}, 
      author={Park, S. G.},
      year={2023},
      eprint={2311.15159},
}

\bib{PetersSteenbrink}{book}{
   author={Peters, C.},
   author={Steenbrink, J.},
   title={Mixed Hodge structures},
   series={Ergebnisse der Mathematik und ihrer Grenzgebiete. 3. Folge. A
   Series of Modern Surveys in Mathematics [Results in Mathematics and
   Related Areas. 3rd Series. A Series of Modern Surveys in Mathematics]},
   volume={52},
   publisher={Springer-Verlag, Berlin},
   date={2008},
   pages={xiv+470},
}

\bib{Saito_MHP}{article}{
   author={Saito, M.},
   title={Modules de Hodge polarisables},
   journal={Publ. Res. Inst. Math. Sci.},
   volume={24},
   date={1988},
   pages={849--995},
}

\bib{Saito_MHM}{article}{
   author={Saito, M.},
   title={Mixed Hodge modules},
   journal={Publ. Res. Inst. Math. Sci.},
   volume={26},
   date={1990},
   pages={221--333},
}

\bib{Saito_microlocal}{article}{
   author={Saito, M.},
   title={On microlocal $b$-function},
   journal={Bull. Soc. Math. France},
   volume={122},
   date={1994},
   pages={163--184},
}

\bib{Saito-HC}{article}{
   author={Saito, M.},
   title={Mixed Hodge complexes on algebraic varieties},
   journal={Math. Ann.},
   volume={316},
   date={2000},
   pages={283--331},
}

\bib{Saito-MLCT}{article}{
      author={Saito, M.},
    title={Hodge ideals and microlocal $V$-filtration},
    journal={preprint arXiv:1612.08667}, 
    date={2016}, 
}

\bib{Sch}{article}{
  title={A simple characterization of Du Bois singularities},
  author={Schwede, K.},
  journal={Compositio Mathematica},
  volume={143},
  number={4},
  pages={813--828},
  year={2007},
  publisher={London Mathematical Society}
}

\bib{shen2023k}{article}{
  title={On $ k $-Du Bois and $ k $-rational singularities},
  author={Shen, W.},
  author={Venkatesh, S.},
  author={Vo, A.},
  journal={arXiv preprint arXiv:2306.03977},
  year={2023}
}

\bib{Steenbrink}{article}{
    AUTHOR = {Steenbrink, J. H. M.},
     TITLE = {Semicontinuity of the singularity spectrum},
   JOURNAL = {Invent. Math.},
    VOLUME = {79},
      YEAR = {1985},
    NUMBER = {3},
     PAGES = {557--565},
     }

\bib{Varchenko}{article}{
   author={Varchenko, A. N.},
   title={The complex singularity index does not change along the stratum
   $\mu ={\rm const}$},
   journal={Funktsional. Anal. i Prilozhen.},
   volume={16},
   date={1982},
   number={1},
   pages={1--12, 96},
}

\bib{Wei}{article}{
  title={Logarithmic comparison with smooth boundary divisor in mixed Hodge modules},
  author={Wei, C.},
  journal={Michigan Mathematical Journal},
  volume={69},
  number={1},
  pages={201--223},
  year={2020},
  publisher={University of Michigan, Department of Mathematics}
}

\end{biblist}

\end{document}